\newtheorem{thm}{Theorem}[section]
\newtheorem{cor}[thm]{Corollary}
\newtheorem{lem}[thm]{Lemma}
\theoremstyle{definition}
\newtheorem{defn}{Definition}[section]
\theoremstyle{remark}
\begin{document}

\title{Pattern Equivariant Representation Variety of Tiling Spaces for Any Group G}

\author{Haydar O\={g}uz Erdin}
\date{May 2, 2009}
\maketitle

\index{Abstract}
\begin{abstract}
It is well known that the moduli space of flat connections on a trivial principal bundle $M\times G$, where $G$ is a connected Lie group, is isomorphic to the representation variety $Hom(\pi_1(M), G)/G$.For a tiling $T$, viewed as a marked copy of $\mathbb{R}^d$, we define a new kind of bundle called \textit{pattern equivariant bundle} over $T$ and consider the set of all such bundles. This is a topological invariant of the tiling space induced by $T$, which we call \textit{PREP(T)}, and we show that it is isomorphic to the direct limit $\varinjlim_{f_n} Hom(\pi_1(\Gamma_n), G)/G$, where $\Gamma_n$ are the approximants to the tiling space. $G$ can be any group. As an example, we choose $G$ to be the symmetric group $S_3$ and we calculate this direct limit for the Period Doubling tiling and its double cover, the Thue-Morse tiling, obtaining different results. This is the simplest topological invariant that can distinguish these two examples.
\end{abstract}

\section{Introduction}
In this paper we generalize an idea first mentioned by Sadun in \cite{Sadun1}. It is well known that given a connected group $G$ and a manifold $M$, the moduli space of flat connections on the trivial principal bundle $M\times G$ is isomorphic to the \textit{representation variety} $Hom(\pi_1(M), G)/G$, where the quotient is by conjugation. When $G$ is abelian, this reduces to $H^1(M, G)$, so we restrict attention to non-abelian $G$. Working on tiling spaces, we consider flat, pattern equivariant (PE) connections on the trivial principal bundle $\mathbb{R}^d\times G$, modulo PE gauge transformations and call this the \textit{PE representation variety} of the tiling $T$. Then, by considering these PE connections as pullbacks of zero curvature connections on the approximants $\Gamma_n$, it is not hard to see that the PE representation variety of the tiling $T$ is the direct limit $\varinjlim_{f_n} Hom(\pi_1(\Gamma_n), G)/G$, where $f_n$ is the dual of the forgetful map between the approximants $\Gamma_n$.

The original definition was for connected Lie groups, but calculation of this direct limit is easiest when the group $G$ is finite and hence disconnected. In section \ref{examples}, we take $G=S_3$ and calculate it for the Period Doubling tiling and its double cover, the Thue-Morse tiling. This calculation shows that, even though these two tiling spaces have the same cohomology, the PE representation variety distinguishes them. The computer algebra program Magma can be used to calculate $Hom(H, G)/G$ for any finitely generated free group $H$ and symmetric group $G$, which will be a lot of help in complicated examples. The relevant instructions on how to implement this on Magma is given after the examples. In order to make the generalization to finite groups, we change the definition of the PE representation variety by defining it not in terms of connections but in terms of bundles. We call it $PREP(T)$ and define it in \ref{prep}. We use $\mathbb{R}^2$ in the definitions but the generalization to $\mathbb{R}^d$ is immediate.   Using this definition, the main result of the paper is proved in section \ref{mainresult}:
\begin{thm}\label{datheorem}
Let $T$ be a tiling space and $K_t$, $t\in\mathbb{R}$ be its G\"{a}hler approximants. Let $t_1\leq t_2\leq...$ be an increasing sequence and $f:K_{t_i}\rightarrow K_{t_j}$ be the forgetful map where $t_i$ is bigger than $t_j$. Then,
  \begin{equation}
  PREP(T) = \varinjlim_{f^*} Hom(\pi_1(K_{t_n}), G)/G.
  \end{equation}
\end{thm}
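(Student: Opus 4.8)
The plan is to identify the same object under three guises: pattern equivariant bundles over $T$, flat $G$-bundles over the tiling space $\Omega$, and conjugacy classes of representations of the fundamental groups of the approximants. First I would record the inverse limit structure of the tiling space, $\Omega=\varprojlim(K_{t_n},f)$, with canonical projections $\pi_n\colon\Omega\to K_{t_n}$ satisfying $\pi_j=f\circ\pi_i$ whenever $t_i\geq t_j$, and with $T$ sitting inside $\Omega$ as a point whose $\mathbb{R}^d$-orbit is dense. The first real step is to show that a PE bundle over $T$, as defined in Section \ref{prep}, is the same datum as a flat $G$-bundle over $\Omega$. By definition a PE bundle has a finite radius of influence: its local structure at a point is determined by the pattern within a bounded neighborhood. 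This is precisely the condition needed for the bundle to descend along the orbit map to a bundle over $\Omega$, while conversely any $G$-bundle over $\Omega$ restricts along the dense orbit to a PE bundle; one checks that PE gauge equivalence corresponds exactly to bundle isomorphism over $\Omega$.

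The technical core is to prove that every flat $G$-bundle over $\Omega$ is pulled back via some $\pi_n$ from a flat $G$-bundle over a single approximant $K_{t_n}$, and that two such pullbacks from $K_{t_n}$ become isomorphic over $\Omega$ if and only if they already agree over some later approximant $K_{t_m}$ with $t_m\geq t_n$. The existence half reuses the finite radius of influence: extracting a uniform radius $R$ and choosing $t_n$ large enough that $K_{t_n}$ resolves all patches out to radius $R$, the bundle factors through $\pi_n$. Equivalently, since $G$ is discrete, a flat bundle over the compact inverse limit $\Omega$ is locally constant and so must factor through a finite stage. The identification half is the standard statement of when pullbacks are identified in a direct system. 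Together these two assertions say exactly that the set of flat $G$-bundles over $\Omega$ equals $\varinjlim_{f^*}\{\text{flat } G\text{-bundles over } K_{t_n}\}$.

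Finally I would invoke the classical classification: for each $n$ the flat $G$-bundles over the connected complex $K_{t_n}$ are in bijection with $Hom(\pi_1(K_{t_n}),G)/G$, sending a flat bundle to the conjugacy class of its holonomy representation, and this bijection intertwines the pullback $f^*$ on bundles with the induced map on representation varieties. Chaining the three identifications, $PREP(T)\cong\{\text{flat }G\text{-bundles over }\Omega\}\cong\varinjlim_{f^*}\{\text{flat }G\text{-bundles over }K_{t_n}\}\cong\varinjlim_{f^*}Hom(\pi_1(K_{t_n}),G)/G$, which is the claim.

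The main obstacle will be the descent step, namely genuinely proving that a flat $G$-bundle over the inverse limit factors through a finite approximant rather than requiring the whole tower. The delicate points are translating the abstract PE radius of influence into a concrete threshold $t_n$ beyond which the approximant is fine enough to carry the bundle, and then matching the isomorphism relation for bundles over $\Omega$ with the identifications built into the algebraic direct limit; once these are settled the remaining steps are the routine functoriality of holonomy and pullback.
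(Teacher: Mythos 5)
Your proposal reaches the right conclusion and its last two links (flat bundles over each $K_{t_n}$ classified by $Hom(\pi_1(K_{t_n}),G)/G$ via holonomy, compatibly with $f^*$) coincide with the paper's Sections \ref{gamma} and \ref{mainresult}. Where you genuinely diverge is in the first link: you interpose the tiling space $\Omega$ itself, identifying PE bundles over $T$ with flat $G$-bundles over $\Omega$ and then arguing by compactness that any such bundle factors through a finite approximant. The paper never passes through $\Omega$. Instead it keeps the radius $t$ of pattern equivariance explicit: the lemmas of Section \ref{bundlesapproximants} show that PE open sets and PE functions of radius $t$ on $\mathbb{R}^2$ are \emph{exactly} the pullbacks of open sets and functions on $K_t$ along the orbit map $\pi_t:\mathbb{R}^2\to K_t$, so a radius-$t$ PE bundle is by construction a pullback from $K_t$ (Theorem \ref{thm:aaa}), and the equivalence relation in Definition \ref{prep} (isomorphism at some common larger radius) is literally the direct limit over $\tilde i:\tilde B_{t_i}\hookrightarrow\tilde B_{t_j}$. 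This makes the ``descent to a finite stage'' step, which you correctly single out as your main obstacle, entirely unnecessary. Your route is workable but you should be aware of two points. First, the fibers of $\pi_n:\Omega\to K_{t_n}$ are Cantor sets, so a locally constant transition function on a pulled-back open set $\pi_n^{-1}(V)$ does \emph{not} automatically factor through $\pi_n$; it only factors through some later $\pi_m$, and establishing this uniformly over a finite cover is real work---precisely the work the paper's definitions are engineered to avoid. Second, your aside ``since $G$ is discrete'' assumes something the theorem does not: $G$ is arbitrary here, and the only discreteness available is the local constancy of the transition functions coming from flatness, so your compactness argument must be phrased in those terms. What your approach buys, if carried out, is a statement intrinsic to $\Omega$ (flat bundles over the tiling space itself) rather than one phrased through the auxiliary radius parameter; what the paper's approach buys is that, once the definitions are in place, the proof of the theorem is a four-line chain of identifications.
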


Once the right definitions have been chosen, many of the proofs are immediate (and frequently omitted). The sections before \ref{mainresult} work toward the proof of this result. In section \ref{invariance} we show the invariance of the PE representation variety under homeomorphisms of tiling spaces using a theorem of Rand \cite{Rand}.

A finer invariant can be obtained without modding out by $G$ in the direct limit. To achieve this, we consider only based bundles in the definition of $PREP(T)$ and call it the $PREP_b(T)$. This is the definition in \ref{basedbundle} and in section \ref{gamma} we have

\begin{thm}\label{datheorem2}
Let $T$ be a tiling space and $K_t$, $t\in\mathbb{R}$ be its G\"{a}hler approximants. Let $t_1\leq t_2\leq...$ be an increasing sequence and $f:K_{t_i}\rightarrow K_{t_j}$ be the forgetful map where $t_i$ is bigger than $t_j$. Then,
  \begin{equation}
  PREP_b(T) = \varinjlim_{f^*} Hom(\pi_1(K_{t_n}), G).
  \end{equation}
\end{thm}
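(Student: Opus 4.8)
The plan is to mirror the proof of Theorem \ref{datheorem} (which the excerpt treats as already established), showing how the passage to \emph{based} bundles removes exactly the conjugation quotient. I would first recall that $PREP_b(T)$ is defined via pattern equivariant bundles equipped with a choice of basepoint data (a trivialization over a distinguished fiber), so that the relevant gauge transformations are required to fix that basepoint. The key structural input is the identification of PE bundles with zero-curvature bundles pulled back from the G\"{a}hler approximants $K_{t_n}$; each such flat bundle over $K_{t_n}$ is classified by its holonomy representation $\rho\in Hom(\pi_1(K_{t_n}),G)$, and a \emph{based} flat bundle is classified by $\rho$ itself rather than by its conjugacy class.

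First I would fix a basepoint $x_0\in K_{t_n}$ and observe that the holonomy of a flat based bundle gives a well-defined homomorphism $\pi_1(K_{t_n},x_0)\to G$, with no ambiguity, because the basepoint trivialization pins down the fiber over $x_0$. A based gauge transformation is one that is the identity on the fiber over $x_0$; such a transformation conjugates the holonomy by the \emph{identity}, hence leaves $\rho$ unchanged. Conversely, two based flat bundles with the same holonomy representation are based-gauge-equivalent. This yields the bijection between based flat bundles over $K_{t_n}$ (modulo based gauge equivalence) and $Hom(\pi_1(K_{t_n}),G)$, in contrast to the unbased case where free gauge transformations act by conjugation and produce the quotient $Hom(\pi_1(K_{t_n}),G)/G$.

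Next I would verify that the forgetful maps $f\colon K_{t_i}\to K_{t_j}$ act compatibly with basepoints, so that the induced maps $f^*\colon Hom(\pi_1(K_{t_j}),G)\to Hom(\pi_1(K_{t_i}),G)$ assemble into a directed system whose direct limit computes the based invariant. Since a PE based bundle over $T$ is, by the same pullback argument used for Theorem \ref{datheorem}, realized as a pullback from some approximant with all data preserved under the forgetful maps, the direct limit $\varinjlim_{f^*} Hom(\pi_1(K_{t_n}),G)$ is exactly $PREP_b(T)$. The step-by-step order is: (i) unwind the definition in \ref{basedbundle}; (ii) establish the based holonomy classification over a single $K_{t_n}$; (iii) check basepoint compatibility of the forgetful maps; (iv) conclude by taking the direct limit.

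The main obstacle I anticipate is the basepoint bookkeeping in (iii): the forgetful map between approximants is not literally basepoint preserving on the nose, since the distinguished tile or puncture may move, so I would need to argue that any two natural choices of basepoint induce maps on $Hom(\pi_1,G)$ that differ only by an inner automorphism, and that this inner ambiguity becomes harmless precisely because the \emph{based} structure carried by the PE bundle supplies a canonical path identifying the basepoints in the direct system. Confirming that this canonical identification is consistent across the whole tower—so that the direct limit is genuinely well-defined and independent of the auxiliary choices—is where the real care is required; everything else follows formally from the already-proved unbased theorem by deleting the conjugation quotient.
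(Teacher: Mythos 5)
Your proposal follows essentially the same route as the paper: identify based PE bundles with pullbacks of flat based bundles from the approximants, replace the conjugacy-class classification by the based holonomy classification (the paper's theorem that flat principal based $G$-bundles over $X$ correspond to $Hom(\pi_1(X),G)$), and pass to the direct limit over the forgetful maps exactly as in the unbased case. The paper itself gives no separate argument for this theorem beyond the remark that the proofs modify easily, so your explicit treatment of why based gauge transformations act trivially on the holonomy and of the basepoint bookkeeping for the forgetful maps supplies detail the paper leaves implicit, but it is the same proof.
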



\section{Two Examples}\label{examples}

For both of the examples below, the approximants of the tilings are the wedge of two circles. Hence their fundamental group is the free group on two generators, $F_2$. In both examples, our group $G$ will be the simplest non-abelian group, that is the dihedral group of order 6: $S_3=\{1, a, a^2, b, ab, a^2b\}$. So we need to calculate $Hom(F_2, S_3)/S_3$ and the only difference in the two examples will be the forgetful map in the direct limit. A two page hand calculation shows that

\begin{equation}
Hom(F_2, S_3)/S_3=\{\phi_{1,1}, \phi_{1,a},\phi_{1,b},\phi_{a,1},\phi_{b,1},\phi_{a,a},\phi_{a,a^2},\phi_{a,b},\phi_{b,a},\phi_{b,b},\phi_{ba,ab}\},
\end{equation}
where $\phi_{x,y}$ denotes the conjugacy class of a homomorphism which takes one generator of $F_2$ to $x$ and the other to $y$ in $S_3$.

\subsection{Example 1: Thue-Morse}
The substitution rule for the Thue-Morse tiling is given by $\sigma(\alpha)=\alpha\beta$, $\sigma(\beta)=\beta\alpha$. The dual of the forgetful map $f$ is the substitution map itself thought of as a homomorphism from $F_2$ to $S_3$. We have to trace each of the 11 elements of $Hom(F_2, S_3)/S_3$ and see which ones are identified. A typical calculation goes as follows: $$f^*\circ \phi_{1, b}(\alpha)=\phi_{1, b}(\sigma(\alpha))=\phi_{1, b}(\alpha\beta)=1\cdot b=b.$$ A similar calculation with the other generator gives $\phi_{1, b}(\beta\alpha)=b\cdot 1=b$. Hence $\phi_{1, b}$ is mapped to $\phi_{b, b}$ in the direct limit. Repeating this for all the other elements shows that

$$\varinjlim_{f^*} Hom(F_2, S_3)/S_3=\{\phi_{1, 1}, \phi_{a, a}\}.$$

\subsection{Example 2: Period Doubling}
The substitution rule for the Period Doubling tiling is given by $\sigma(\alpha)=\beta\beta$, $\sigma(\beta)=\alpha\beta$. A similar calculation to the above shows that

$$\varinjlim_{\sigma^*} Hom(F_2, S_3)/S_3=\{\phi_{1, 1}, \phi_{1, b}, \phi_{a, a}, \phi_{1, a}, \phi_{a^2, a},\phi_{a^2, 1}\}.$$

Comparing the two direct limits, we see that it differentiates between the Period Doubling and its double cover, the Thue-Morse tiling space. This is the simplest topological invariant that tells them apart.

\subsection{Magma Code For Calculating Representation Variety}

For any finitely generated free group F and symmetric group S, we will give instructions for how to compute $Hom(F, S)/S$ using the computer algebra program Magma. The example we give is for a free group generated by two elements and symmetric group of order six but the generalization is trivial. After each step below hit the \emph{Enter} key (the semi-colon is typed into command line too).
\begin{enumerate}
\item Define the free group: F:=FreeGroup(2);
\item Define the symmetric group: S:=SymmetricGroup(3);
\item Calculate $Hom(F, S)/S$: RV:= Homomorphisms(F, S: Surjective := false);
\item Show the cardinality of $RV$: \#homs;
\end{enumerate}

After these steps, the command line should return ``11``, which was the result we obtained in the examples above.


\section{PE Bundles}\label{pedef}

The idea of modifying cohomology to obtain PE cohomology so that it is applicable to tilings was first given by Kellendonk and Putnam in \cite{Kel} and \cite{Put}. We use the same idea in the category of bundles. One can intuitively picture a $G$-bundle as a geometric object (say an interval) glued to itself  along another geometric object (say a circle) but with a twist dictated by the group $G$. The intuitive idea behind PE bundles over a tiling $T$ is that this twist should also depend on the patterns in the tiling $T$. That is, if two points have the same pattern of tiles up to a certain range around them, them we should invoke the same group element which will dictate the twisting. For this we need the following definitions:

\begin{defn}[PE open set]\index{pattern equivariant open set}
An open set $U$ of $\mathbb{R}^2$ is said to be a \textit{PE open set with radius $t>0$} if for every $x\in U$ it contains all $y\in \mathbb{R}^2$ such that $[B_t(x)]-x = [B_t(y)]-y$, where $[B_t(x)]$ is the patch of the tiling $T$  containing the ball $B_t(x)$ with center $x$ and radius $t$.
\end{defn}

\begin{defn}[PE open cover]\index{pattern equivariant open cover}
An open cover $\{U_i\}$ of $\mathbb{R}^2$ is said to be a \textit{PE open cover with radius $t>0$} if  every $U_j\in \{U_i\}$ is a PE open set with radius $t$.
\end{defn}

\begin{defn}[PE fiber bundle]\index{pattern equivariant fiber bundle}
Let $F$, $P$ be topological spaces and $\pi: P \rightarrow  \mathbb{R}^2$ be an onto continuous map. $P$ is called a \textit{PE fiber bundle over T with radius} $t$ and fiber $F$ if there exists a PE open cover $\{U_i\}$ of $\mathbb{R}^2$ with radius $t$ such that for every $x\in \mathbb{R}^2$ there exists a $U_j\in \{U_i\}$ with the following properties:

  \begin{enumerate}
    \item $U_j \times F$ is homeomorphic to $\pi^{-1}(U_j)$ by a homeomorphism $\phi_j$.
    \item The following diagram is commutative:
  \end{enumerate}
\end{defn}

\begin{diagram}
 U_j \times F & \rTo^{\phi_j}  & \pi^{-1}(U_j) \\
                       & \rdTo_{p_1}  & \dTo_{\pi}\\
                       &                        & U_j
\end{diagram}
The pairs $(U_j, \phi_j)$ are called \textit{charts} as usual.

\begin{defn}[PE function]
Let $T$ be a tiling of $\mathbb{R}^2$. A function $f:\mathbb{R}^2 \rightarrow \mathbb{R}$ is called a \textit{PE function with radius $t$} if for any $x, y\in T$ such that $[B_t(x)]-x=[B_t(y)]-y$, we have $f(x)=f(y)$.
\end{defn}

\begin{defn}[PE G-bundle]\index{pattern equivariant G-bundle}
Let $P$ be a PE fiber bundle with radius $t$ as in the above definition. Let $F$ be its fiber space and assume there is a left $G$ action on $F$. If for every pair of charts $(U_i, \phi_i)$ and $(U_j, \phi_j)$ the map  $$\Phi := \phi_j^{-1} \circ \phi_i : (U_j \cap U_i)\times F \rightarrow (U_j \cap U_i)\times F$$ has the form $\Phi(u,f)=(u,\; h_{ji}(u)\cdot f)$, where $h_{ji}: (U_j \cap U_i) \rightarrow G$ is a PE function with radius $t$ and the $\cdot$ is the action of $G$ on the fiber $F$, then we call $P$ a \textit{PE G-bundle over T with radius t}. The maps $h_{ji}$ are called the \textit{transition functions}\index{transition functions}.
\end{defn}

\begin{defn}[PE principal bundle]\index{pattern equivariant principal bundle}
If the PE G-bundle $P$ with radius $t$ has $G$ itself as its fiber space with the left multiplication in the group as the group action and if the induced action on the fibers $$\mu:\pi^{-1}(x)\times G\rightarrow \pi^{-1}(x)$$ is free and transitive then $P$ is called a \textit{PE principal bundle over T with radius t and group G}.
\end{defn}

Isomorphism of two PE bundles over the tiling $T$ is defined as in the usual bundle theory with the only difference being the requirement that the transition functions must be PE.

As in the usual bundle theory, we also have the fact that a refinement of  a trivializing neighborhoods is also a trivializing neighborhood. Hence, given two PE G-bundles, we can assume that they have the same PE open cover by looking at the intersection of their covers. This gives us the following characterization whose proof can be obtained with minor modifications to the non-PE case (see Naber's book \cite{Naber}).

\begin{lem}[Bundle isomorphism lemma\index{Bundle isomorphism lemma}]
Let $P_1$ and $P_2$ be two PE G-bundles with the same PE open cover $\{U_i\}$ of radius $t$ and the same fiber $F$ over $\mathbb{R}^2$. Let $h_{ji}$ and $\grave{h}_{ji}$ be their transition functions respectively. Then, $P_1$ and $P_2$ are isomorphic if and only if there exists PE functions $\lambda_j: U_j\rightarrow G$ with radius $t$ such that
\begin{equation}\label{eq:eqvl}
\grave{h}_{ji}(x)=\lambda_j(x)^{-1}h_{ji}(x)\lambda_i(x),
\end{equation}
for all $x\in U_i\cap U_j$.
\end{lem}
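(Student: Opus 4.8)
The plan is to follow the classical argument relating the transition functions of isomorphic bundles by a coboundary (as in Naber), while checking at each step that the auxiliary $G$-valued functions produced remain PE of radius $t$. First I would fix trivializations $\phi_i$ for $P_1$ and $\phi'_i$ for $P_2$ over the common cover $\{U_i\}$, so that $\phi_j^{-1}\circ\phi_i(x,f)=(x, h_{ji}(x)\cdot f)$ and $(\phi'_j)^{-1}\circ\phi'_i(x,f)=(x, \grave{h}_{ji}(x)\cdot f)$.

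For the forward direction, assuming a PE bundle isomorphism $\Psi: P_1 \to P_2$, I would observe that over each $U_j$ the composite $(\phi'_j)^{-1}\circ\Psi\circ\phi_j$ covers the identity and is $G$-equivariant on fibers, hence has the form $(x, f)\mapsto (x, \lambda_j(x)^{-1}\cdot f)$ for a unique $\lambda_j: U_j\to G$; by the definition of a PE isomorphism this $\lambda_j$ is PE of radius $t$. Comparing the two expressions for $\Psi\circ\phi_i$ on an overlap $U_i\cap U_j$---one passing through chart $i$ directly and one rerouting through chart $j$ via $h_{ji}$ and $\grave{h}_{ji}$---and using that $\phi'_j$ is a homeomorphism (equivalently, that the $G$-action is free), should force exactly the relation (\ref{eq:eqvl}).

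For the backward direction, given PE functions $\lambda_j$ of radius $t$ satisfying (\ref{eq:eqvl}), I would define $\Psi$ over each $U_j$ as the fiberwise action of $\lambda_j(x)^{-1}$, namely $\Psi|_{U_j}=\phi'_j\circ(\mathrm{id}\times\lambda_j^{-1})\circ\phi_j^{-1}$. The one thing to verify is that these local pieces agree on overlaps; substituting the definitions and cancelling should show that agreement on $U_i\cap U_j$ is equivalent to (\ref{eq:eqvl}), so that the pieces glue to a global homeomorphism covering the identity and commuting with the $G$-action, whose local representatives $\lambda_j^{-1}$ are PE of radius $t$.

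The hard part---modest, as the authors note---will be the bookkeeping of pattern equivariance, since this is the only place the argument departs from the non-PE case. I would isolate the elementary fact that if $g, h: U\to G$ are PE of radius $t$ then so are $x\mapsto g(x)h(x)$ and $x\mapsto g(x)^{-1}$: both follow at once from the definition, because $[B_t(x)]-x=[B_t(y)]-y$ simultaneously forces $g(x)=g(y)$ and $h(x)=h(y)$. With this closure under products and inverses, the PE-ness of the $\lambda_j$ in the forward direction and of the glued $\Psi$ in the backward direction is confirmed with the same radius $t$, and the classical computation supplies the rest.
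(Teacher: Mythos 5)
Your proposal is correct and is essentially the proof the paper intends: the paper omits the argument entirely, remarking only that it ``can be obtained with minor modifications to the non-PE case (see Naber's book),'' and your write-up is precisely that classical coboundary argument together with the one genuinely new observation needed, namely that PE functions of radius $t$ are closed under pointwise products and inverses so that the $\lambda_j$ and the glued isomorphism remain PE of the same radius.
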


\begin{thm}[Bundle construction theorem\index{Bundle construction theorem}]
Let $\{U_i\}, i\in J$ be a PE open cover of $\mathbb{R}^2$ with radius $t$ where $J \subseteq \mathbb{N}$ is an index set, let $F$ be a topological space and let $G$ be a topological group which acts on $F$. Assume for every $U_i \cap U_j \neq \emptyset$ there exists continuous PE functions $h_{ji}:U_j \cap U_i \rightarrow G$ with radius $t$ such that for every  $x\in U_i \cap U_j \cap U_k$ we have 
\begin{equation}
h_{kj}(x)\cdot h_{ji}(x) = h_{ik}(x)
\end{equation}
Consider the space $\mathbb{R}^2 \times F \times J$ and its subset $T:=\{(x,g,i)\in \mathbb{R}^2\times F \times J : x\in U_i\}=\coprod_{i} U_i\times F \times \{i\}$. Define an equivalence relation $\leftrightarrow$ on T as follows: $$(x, f, i)\leftrightarrow(\acute{x}, \acute{f}, j) \Leftrightarrow \acute{x}=x\; and\; \acute{f}=h_{ji}(x)\cdot f$$ Let $P$ be the set of equivalence classes of this equivalence relation. Then, $P$ is a PE G-bundle with radius $t$ and is unique up to isomorphism of PE bundles.
\end{thm}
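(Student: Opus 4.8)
The plan is to follow the classical clutching/associated-bundle construction (as in Naber) and to verify at each step that the pattern-equivariant qualifiers survive, which they do essentially for free because both the cover $\{U_i\}$ and the functions $h_{ji}$ are assumed PE with radius $t$. First I would check that $\leftrightarrow$ is genuinely an equivalence relation on $T$. Reflexivity, symmetry, and transitivity all reduce to the familiar consequences of the cocycle identity: setting all three indices equal forces $h_{ii}(x)$ to be the identity of $G$, which gives $(x,f,i)\leftrightarrow(x,f,i)$; setting $k=i$ forces $h_{ij}(x)=h_{ji}(x)^{-1}$, which gives symmetry; and the cocycle identity itself on $U_i\cap U_j\cap U_k$ gives transitivity. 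No PE input is needed here, as this is pure group bookkeeping on the transition functions.

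Next I would endow $P$ with the quotient topology coming from the natural surjection $q:T\rightarrow P$, and define $\pi:P\rightarrow\mathbb{R}^2$ by $\pi([x,f,i])=x$. This is well defined because equivalent triples share their first coordinate, it is continuous since $\pi\circ q$ is the projection of $T$ onto its $\mathbb{R}^2$ factor, and it is onto because $\{U_i\}$ covers $\mathbb{R}^2$. For each $i$ I would define $\phi_i:U_i\times F\rightarrow\pi^{-1}(U_i)$ by $\phi_i(x,f)=[x,f,i]$. The bulk of the work — and the one genuinely delicate point — is to show that each $\phi_i$ is a homeomorphism onto $\pi^{-1}(U_i)$ that commutes with the projection; this amounts to the standard verification that $q$ restricted to $U_i\times F\times\{i\}$ is injective and open onto its image, exactly as in the non-PE case, and it is here that one must be careful with the quotient topology. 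Once this is done, the overlap map $\phi_j^{-1}\circ\phi_i$ sends $(x,f)$ to $(x,\;h_{ji}(x)\cdot f)$ directly from the definition of $\leftrightarrow$, so the transition functions of $P$ are precisely the given $h_{ji}$, and since $F$ carries a left $G$-action this overlap map has exactly the form required in the definition of a PE $G$-bundle.

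Because the $U_i$ form a PE open cover with radius $t$ and each recovered transition function $h_{ji}$ is PE with radius $t$ by hypothesis, the charts $(U_i,\phi_i)$ exhibit $P$ as a PE $G$-bundle with radius $t$ in the sense of the definition; no extra argument is required for the PE clause, which is precisely the content of the ``minor modifications'' remark. Finally, uniqueness up to PE isomorphism follows immediately from the Bundle isomorphism lemma: any PE $G$-bundle over $\mathbb{R}^2$ with the cover $\{U_i\}$ and the same transition functions $h_{ji}$ is related to $P$ by the trivial gauge $\lambda_j\equiv e$, which satisfies \eqref{eq:eqvl} and is trivially PE, so the two are isomorphic as PE bundles. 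The main obstacle, as is usual in this construction, is thus the topological verification that the $\phi_i$ are homeomorphisms, rather than anything involving the pattern-equivariant structure, which is inherited automatically from the hypotheses on $\{U_i\}$ and the $h_{ji}$.
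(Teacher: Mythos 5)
Your proposal is correct and follows exactly the route the paper intends: the proof of this theorem is omitted there as a ``minor modification'' of the standard clutching construction in Naber, and your verification that the PE cover and the PE transition functions are inherited verbatim, together with the uniqueness argument via the Bundle isomorphism lemma applied with the trivial gauge $\lambda_j\equiv e$, supplies precisely those modifications. (You also correctly read the cocycle condition as $h_{kj}(x)\cdot h_{ji}(x)=h_{ki}(x)$; the $h_{ik}$ appearing in the paper's display is evidently a typo, since your derivation of $h_{ii}=e$ and $h_{ij}=h_{ji}^{-1}$, and the transitivity of $\leftrightarrow$, require the standard form.)
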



\begin{defn}[flat G-bundle]\index{flat G-bundle}
A G-bundle is called \textit{a flat G-bundle} if all the transition functions are locally constant functions.
\end{defn}

Notice that if $P$ is a PE bundle with radius $t$ over $T$ then it is also a PE bundle for any radius $t'$ which is bigger than $t$. This leads to the following equivalence relation:
\begin{defn} Let $P$ and $R$ be PE bundles  over the tiling $T$ with radii $p$ and $r$. If there exists an $s$ bigger than both $p$ and $r$, and if there exists an isomorphism between $P$ and $R$ as $s$ bundles, then we consider them to be equivalent PE bundles.
\end{defn}

We can now define the crucial object and state the main result that will be our focus.

\begin{defn}[PE representation variety]\label{prep}
Let $S$ be the set of all flat PE principal $G$-bundles $P$ over the tiling $T$ of $\mathbb{R}^2$. The set of equivalence classes in $S$ with respect to the above equivalence relation is called the \textit{PE representation variety of} $T$ and denoted $PREP(T)$. 
\end{defn}

Now we will  define \emph{based bundles} which will give us a finer invariant then $PREP(T)$.
\begin{defn}[based G-bundle]\label{basedbundle}
Let $(P, p_0)$ be a G-bundle with a fixed point $p_0$ chosen. Assume also that all the transitions functions arising from the chart $U_{p_0}$ around $p_0$ are constant; that is $g_{pp_0}=1$. Then, we call $(P, p_0)$ a \textit{based G-bundle}\index{based G-bundle} with the \textit{base point} $p_0$.
\end{defn}

\begin{defn}[based pattern equivariant representation variety]\label{basedprep}
Let $S$ be the set of all flat based PE principal $G$-bundles $P$ over the tiling $T$ of $\mathbb{R}^2$. The set of equivalence classes in $S$ with respect to the same 
equivalence relation as in the definition \ref{prep}  is called the \textit{ Based-PE representation variety of} $T$ and denoted $PREP_b(T)$. 
\end{defn}


\section{PE bundles and approximants}\label{bundlesapproximants}
\index{Arrangement of Dissertation@\emph{Arrangement of Dissertation}}%

The definition we will use for the approximants is a continuous generalization of G\"{a}hler's construction with collared tiles first introduced in \cite{Gahler}. Consider the tautological $\mathbb{R}^2$ bundle  $E\rightarrow \Omega$ over the tiling space $\Omega$ where the fiber over a $T\in\Omega$ is  $T$ itself as a marked copy of $\mathbb{R}^2$. We define an equivalence relation on the total space $E$ as follows:

\begin{defn}
Let $t\in \mathbb{R}$ and $x$, $y$ in $E$ be such that $x\in T$, $y\in T'$. Then, $x\leftrightarrow_t y$ if $[B_t(x)]_T-x=[B_t(y)]_{T'}-y.$
\end{defn}

\begin{defn}[Approximants for $\Omega$]
The quotient of $E$ with respect to the above equivalence relation is called the \textit{t approximant of \;$\Omega$} and is denoted by $K_t$.
\end{defn}

By sending a $T\in\Omega$ to the equivalence class of $0\in T$ we obtain a natural projection $\pi_t:\Omega \rightarrow K_t$. Thus, $K_t$ is the set of all possible instructions for layering tiles around the origin up to distance $t$, generalizing Gahler's construction to a continuous radius $t$. For $t_2\geq t_1$ we have the \textit{forgetful map}\index{forgetful map} $f: K_{t_2}\rightarrow K_{t_1}$ which forgets the pattern between the radii $t_1$ and $t_2$ and sends the pattern up to $t_2$ to the pattern up to $t_1$. If we choose a sequence of increasing radii $t_1\leq t_2\leq\ldots$ going to infinity, then a point in the inverse limit space $S=\stackrel{lim}{\leftarrow}(K_{t_i}, f)$ will be a consistent instruction for tiling all of $\mathbb{R}^2$. This gives a bijection between $\Omega$ and $S$, which can be shown to be a homeomorphism.

We will use the same letter $\pi$ to denote the restriction of the natural projection $\pi_t:\Omega\rightarrow K_t$ to the translational orbit of $T$. Hence the restriction is denoted by $\pi_t:\mathbb{R}^2\rightarrow K_t$. We will see that the set of all PE G-bundles over a tiling $T$ of $\mathbb{R}^2$ is equal to the union of the sets of pullbacks of all the G-bundles on all of the approximants $K_t$.

\begin{lem}
Let $K_t$ be the radius $t$ approximant to the tiling $T$ and let $\pi_t : \mathbb{R}^2 \rightarrow K_t$ be the restriction of the natural projection. Let $\{V_i\}$ be an open cover of $K_t$. Then, $\pi_t^*\{V_i\}$ is a PE open cover of $\mathbb{R}^2$ with radius $t$.
\end{lem}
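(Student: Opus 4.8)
The plan is to verify the three defining properties in turn: that each $\pi_t^{-1}(V_i)$ is open, that the collection covers $\mathbb{R}^2$, and --- the only property with any real content --- that each $\pi_t^{-1}(V_i)$ is a PE open set of radius $t$. The first two I would dispatch immediately. Openness follows from the continuity of the restricted projection $\pi_t : \mathbb{R}^2 \to K_t$ coming from the construction of the approximants, since the preimage of an open set under a continuous map is open. The covering property holds because $\{V_i\}$ covers $K_t$ and $\pi_t$ maps into $K_t$: given $x \in \mathbb{R}^2$, the point $\pi_t(x)$ lies in some $V_i$, whence $x \in \pi_t^{-1}(V_i)$.

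The heart of the argument is the observation that the fibers of $\pi_t$ are exactly the equivalence classes appearing in the definition of a PE open set. By the construction of $K_t$ as the quotient of $E$ by the relation $\leftrightarrow_t$, and by the definition of $\pi_t$ as sending $x \in \mathbb{R}^2 = T$ to its equivalence class, two points $x, y \in \mathbb{R}^2$ satisfy $\pi_t(x) = \pi_t(y)$ if and only if $[B_t(x)] - x = [B_t(y)] - y$. In other words, the relation "having the same $t$-patch up to translation" --- which is precisely the relation controlling PE open sets --- coincides with the relation "lying in the same fiber of $\pi_t$." I would state this identification explicitly, as it is the one nontrivial ingredient.

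With this identification in hand, the PE property reduces to a one-line check. I would fix $i$, take any $x \in \pi_t^{-1}(V_i)$ so that $\pi_t(x) \in V_i$, and suppose $y \in \mathbb{R}^2$ satisfies $[B_t(x)] - x = [B_t(y)] - y$. Then by the identification above $\pi_t(y) = \pi_t(x) \in V_i$, giving $y \in \pi_t^{-1}(V_i)$. Thus $\pi_t^{-1}(V_i)$ contains, for each of its points $x$, every $y$ with the same $t$-patch, which is exactly the requirement that it be a PE open set of radius $t$. Since this holds for every $i$, the cover $\pi_t^*\{V_i\}$ is a PE open cover of radius $t$.

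I expect no serious obstacle here: once the definitions are unwound, the lemma is essentially the statement that preimages under $\pi_t$ are saturated with respect to $\leftrightarrow_t$, and are therefore PE by construction. The only point requiring input from outside this proof is the continuity of $\pi_t$, needed for openness; this belongs to the setup of the G\"{a}hler approximants and I would simply cite it as a black box.
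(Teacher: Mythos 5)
Your proof is correct, and it is precisely the argument the paper has in mind: the paper omits the proof as immediate from the definitions, and your unwinding --- continuity of $\pi_t$ for openness, plus the observation that the fibers of $\pi_t$ are exactly the $\leftrightarrow_t$-classes, so preimages are saturated and hence PE of radius $t$ --- is the intended one-line justification. No gaps.
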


Since pullbacks of functions on $K_t$ are PE, this lemma shows that pullbacks of bundles on $K_t$ are PE bundles on $\mathbb{R}^2$ with radius $t$. 

\begin{lem}
Let $U$ be a PE open set of $\mathbb{R}^2$ with radius $t$. Then, there exists an open set $V$ of $K_t$ such that $U=\pi_t^{-1}(V)$.
\end{lem}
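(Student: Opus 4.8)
The plan is to take $V:=\pi_t(U)$ as the desired open set and to verify the two requirements separately: that $\pi_t^{-1}(V)=U$, and that $V$ is open in $K_t$. The first is essentially a restatement of the defining property of a PE open set, while the second is the real content and rests on showing that $\pi_t$ is an open map.

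First I would observe that the fibers of $\pi_t$ are exactly the equivalence classes of $\leftrightarrow_t$: by construction $\pi_t(x)=\pi_t(y)$ if and only if $[B_t(x)]-x=[B_t(y)]-y$. The hypothesis that $U$ is a PE open set of radius $t$ says precisely that whenever $x\in U$ and $[B_t(y)]-y=[B_t(x)]-x$ then $y\in U$; that is, $U$ is saturated with respect to $\leftrightarrow_t$. Hence $U=\bigcup_{x\in U}\pi_t^{-1}(\pi_t(x))=\pi_t^{-1}(\pi_t(U))=\pi_t^{-1}(V)$, and the clause $\pi_t^{-1}(V)=U$ holds with no further work. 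Note that this step does not yet use openness of $U$.

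The substantive step is to show $V=\pi_t(U)$ is open, for which I would prove the stronger statement that $\pi_t:\mathbb{R}^2\to K_t$ is an open map; applying this to the open set $U$ gives that $V$ is open. Since $K_t$ carries the quotient topology coming from the quotient map $E\to K_t$, a set in $K_t$ is open exactly when its preimage in $E$ is open. So for an arbitrary open $W\subseteq\mathbb{R}^2$ I would show that the full preimage in $E$ of $\pi_t(W)$ — the set of all $(T',y)$ whose centered patch $[B_t(y)]_{T'}-y$ agrees with $[B_t(w)]_T-w$ for some $w\in W$ — is open. Given such a point $(T',y)$ matching some $w\in W$, any $(T'',y')$ with $T''$ close to $T'$ in the tiling metric and $y'$ close to $y$ has $[B_t(y')]_{T''}$ equal to $[B_t(y)]_{T'}$ as a concrete patch of the plane, so its centered patch is the translate $([B_t(y)]_{T'}-y)-(y'-y)$; this coincides with the centered patch of $w'=w+(y'-y)$, which lies in $W$ once $y'$ is close enough to $y$ since $W$ is open. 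Thus a whole neighborhood of $(T',y)$ lies in the preimage, the preimage is open, and $\pi_t(W)$ is open.

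The main obstacle is this last openness argument, and the delicate point within it is the interaction between the tiling metric — which controls agreement of two tilings only on a large ball about the \emph{origin} — and the fact that $y$ may be far from the origin. For a fixed $(T',y)$ one must first shrink the metric neighborhood of $T'$ enough to force $T''$ to agree with $T'$ on all of $B_t(y')$ and to prevent any new tile boundary from entering the $t$-ball, after which the translation bookkeeping above goes through verbatim. Because openness need only be checked pointwise, this localization is harmless, but it is precisely the step that uses how $K_t$ is topologized as a quotient of $E$ rather than merely as a set of patterns. Together with repetitivity of $T$, which makes $\pi_t$ surjective, this also exhibits $\pi_t$ as a quotient map and recovers the preceding lemma as the converse direction.
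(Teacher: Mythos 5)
Your choice $V=\pi_t(U)$ and the verification that $\pi_t^{-1}(V)=U$ are correct: the definition of a PE open set of radius $t$ says exactly that $U$ is saturated for the fibers of $\pi_t$, and this half really is immediate (the paper states the lemma with no proof at all). The gap is in the openness step. The stronger statement you reduce to --- that $\pi_t\colon\mathbb{R}^2\to K_t$ is an open map on \emph{all} open sets --- is false, and the point of failure is precisely the one you mention in passing and then dismiss. The assignment $x\mapsto [B_t(x)]-x$ is not continuous: as $x$ moves, a tile can enter or leave the set of tiles meeting $B_t(x)$, and there the centered patch jumps. Concretely, in one dimension with unit tiles $a,b$ and $t=1/4$, let $w$ sit at distance exactly $1/4$ from the right edge of an $a$ tile of $T$ whose successor is $a$, let $W$ be a small interval about $w$, and let $y$ be a point of $T$ at the same relative position inside an $a$ tile whose successor is $b$ (both words $aa$ and $ab$ occur in Thue--Morse and in period doubling). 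Then $y\leftrightarrow_t w$, so $y\in\pi_t^{-1}(\pi_t(W))$, but for every small $\delta>0$ the centered patch at $y+\delta$ contains the $b$ successor and is realized at no point of $W$; hence $\pi_t^{-1}(\pi_t(W))$ is not open, and since $\pi_t$ is continuous, $\pi_t(W)$ is not open. Shrinking the metric neighborhood of $T'$ in $E$ cannot ``prevent a new tile boundary from entering the $t$-ball,'' because the obstruction already occurs in the fiber direction $y\mapsto y+\delta$ within a single tiling.

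The lemma itself survives, but the openness of $\pi_t(U)$ must use the hypothesis that $U$ is PE --- exactly the hypothesis you set aside in that step, having spent it entirely on the saturation clause. A repair: given a point $(T',y)$ of the saturation of $\{T\}\times U$ in $E$, use density of the translational orbit to find $z$ such that $T'$ and $T-z$ agree on a ball of radius $t+1$ about $y$. The point $p=y+z$ then has the same centered $t$-patch as $y$, hence as your $w$, hence lies in $U$ by saturation; and because the agreement between $T'$ and $T-z$ extends \emph{beyond} radius $t$, every $y'$ with $|y'-y|<\min\bigl(1,\mathrm{dist}(p,\mathbb{R}^2\setminus U)\bigr)$ has its centered $t$-patch realized at $p+(y'-y)$, which lies in $U$ because $U$ is open. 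This margin is what an arbitrary open $W$ lacks: there the matching point matches the $t$-patch exactly, with no room to move. With this change the saturation of $\{T\}\times U$ is open in $E$, so $V=\pi_t(U)$ is open in the quotient topology on $K_t$, and the lemma follows.
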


By this lemma, given a PE bundle $P$ with radius $t$ on $\mathbb{R}^2$, we can find a corresponding open cover on $K_t$. Since transition function of $P$ are pullbacks of functions on $K_t$, we can use this open cover and these functions to construct a bundle on $K_t$ by the standard bundle construction theorem of the non-PE bundle theory. Then, the pullback of this bundle to $\mathbb{R}^2$ is $P$. So, we have proved the following theorem:

\begin{thm}\label{thm:aaa}
Let $S$ be the set of all PE G-bundles of $T$ and let $\{K_t\}$ be the set of all approximants so that the tiling space of $T$ is the inverse limit of $K_t$s. Then, $S$ is the union of the sets of pullbacks of all the G-bundles on all of the $K_t$s.
\end{thm}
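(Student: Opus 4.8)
The plan is to prove this set equality by establishing the two inclusions separately, since the two lemmas immediately preceding the statement are tailored to supply precisely these directions. Writing $\mathrm{Bun}(K_t)$ for the G-bundles over $K_t$, the claim is $S=\bigcup_t \pi_t^*\,\mathrm{Bun}(K_t)$.

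First I would treat the inclusion $\bigcup_t \pi_t^*\,\mathrm{Bun}(K_t)\subseteq S$, which is essentially formal. Let $Q$ be any G-bundle on an approximant $K_t$, with trivializing open cover $\{V_i\}$ and transition functions $g_{ji}:V_i\cap V_j\rightarrow G$. The first lemma shows that $\{\pi_t^{-1}(V_i)\}$ is a PE open cover of $\mathbb{R}^2$ with radius $t$, and each pulled-back transition function $g_{ji}\circ\pi_t$ is constant on the $\leftrightarrow_t$ classes, hence is a PE function with radius $t$. Since pullback preserves the cocycle condition, the commuting triangle, and the fiberwise $G$-action pointwise, $\pi_t^*Q$ satisfies the definition of a PE G-bundle with radius $t$, so $\pi_t^*Q\in S$.

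The substantive direction is $S\subseteq\bigcup_t \pi_t^*\,\mathrm{Bun}(K_t)$. Given a PE G-bundle $P$ on $\mathbb{R}^2$ of radius $t$ with PE cover $\{U_i\}$ and PE transition functions $h_{ji}$, I would apply the second lemma to each $U_i$ to obtain open $V_i\subseteq K_t$ with $U_i=\pi_t^{-1}(V_i)$, so that $\{V_i\}$ is an open cover of the image $\pi_t(\mathbb{R}^2)$. The crucial step is to \emph{descend} the transition data: because $h_{ji}$ is PE with radius $t$ it is constant on the fibers of $\pi_t$, hence factors as $h_{ji}=\tilde h_{ji}\circ\pi_t$ for a well-defined $\tilde h_{ji}:V_i\cap V_j\rightarrow G$, whose continuity is forced by the quotient topology on $K_t$. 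Surjectivity of $\pi_t$ onto each overlap then upgrades the cocycle identity for the $h_{ji}$ to the identity $\tilde h_{kj}\,\tilde h_{ji}=\tilde h_{ki}$ on $K_t$. Feeding $\{V_i\}$ and $\{\tilde h_{ji}\}$ into the ordinary (non-PE) bundle construction theorem produces a G-bundle $\tilde P$ over $K_t$.

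It remains to verify $\pi_t^*\tilde P\cong P$, and this is the only point demanding real care. By construction $\pi_t^*\tilde P$ carries the cover $\{U_i\}$ and transition functions $\tilde h_{ji}\circ\pi_t=h_{ji}$, exactly the data defining $P$, so the Bundle isomorphism lemma with all $\lambda_j\equiv 1$ delivers the isomorphism and places $P$ in the union. The main obstacle throughout is not any single computation but getting the descent dictionary exactly right: one must confirm that being PE with radius \emph{precisely} $t$ coincides with being constant on $\pi_t$-fibers, and that the quotient topology on $K_t$ makes the descended covers, functions, and cocycles genuinely continuous. Once this correspondence between radius-$t$ PE data on $\mathbb{R}^2$ and honest data on $K_t$ is in hand, both inclusions reduce to invocations of the two bundle construction theorems.
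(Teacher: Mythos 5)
Your proposal is correct and follows essentially the same route as the paper: the first lemma gives the inclusion $\bigcup_t \pi_t^*\,\mathrm{Bun}(K_t)\subseteq S$, and the second lemma plus descent of the PE transition functions (constant on $\pi_t$-fibers) feeds into the ordinary bundle construction theorem to give the reverse inclusion. You simply make explicit the descent dictionary and the final isomorphism check that the paper leaves implicit.
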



\section{Representation Variety of approximants}\label{gamma}

In this section we will see that the set of all flat principal bundles with group $G$ on a given $K_t$ is isomorphic to $Hom(\pi_1(K_t), G)/G$. The latter is called the \textit{representation variety}\index{representation variety of approximants} of $K_t$ and  modding by the $G$ means the conjugacy classes of the homomorphisms into $G$. Since the arguments we will give can be applied to any flat G-bundle, we will state and prove the claims for a general base space $X$, not just for the approximants $K_t$. The results here are proven by Steenrod in \cite{Steenrod} for the case of totally disconnected groups. We will prove them for any flat $G$-bundle. The way we prove them also reveals that it is easy to modify them to get the similar results for the based bundles.

The main theorem that we want to prove is the following:

\begin{thm}\label{thm:exercise}
Let $P$ be a flat G-bundle over $X$ with fiber $F$. Then, it induces a homomorphism from $\pi_1(X)$ to $G$. Conversely, any such representation is induced by a flat G-bundle over $X$ with fiber $F$.
\end{thm}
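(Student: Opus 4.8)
The plan is to construct the correspondence through the \emph{holonomy} (monodromy) of the flat bundle. Since the approximants $K_t$ are path-connected, I would fix a basepoint $x_0\in X$ lying in a chart $U_0$. Because $P$ is flat, its transition functions $g_{ij}$ are locally constant, so over each chart the identification of fibers is rigid. For a loop $\gamma$ based at $x_0$ I would use the Lebesgue number of the cover $\{U_i\}$ to choose a subdivision $0=s_0<s_1<\cdots<s_n=1$ with $\gamma([s_{k-1},s_k])\subseteq U_{i_k}$ and $U_{i_0}=U_{i_n}=U_0$, and define the holonomy $\rho(\gamma)\in G$ as the ordered product $g_{i_n i_{n-1}}\cdots g_{i_2 i_1}\,g_{i_1 i_0}$ of the transition functions encountered along the loop, each evaluated at a point of the relevant overlap. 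Local constancy guarantees that this value is insensitive to exactly where inside each double overlap the chart is switched.

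I would first check that $\rho(\gamma)$ is independent of the subdivision and of the chosen charts. Inserting an extra subdivision point, or replacing one chart by another containing the same subarc, alters the product only through a factor of the form $g_{ki}$ versus $g_{kj}\,g_{ji}$; the cocycle condition of the Bundle construction theorem makes these equal, and $g_{ii}=1$ shows that a redundant chart contributes trivially. The hard part will be homotopy invariance. Given a homotopy $H:[0,1]^2\to X$ rel endpoints between two loops, I would again invoke the Lebesgue number to partition the square into small subrectangles, each with image in a single chart. On any such subrectangle the holonomy around its boundary is a product of transition functions all subordinate to one chart, so it collapses to the identity by local constancy and the cocycle identity. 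Assembling these trivial boundary contributions across the grid, the interior edges cancel in pairs and the constant vertical sides contribute trivially, leaving the holonomy of the top edge equal to that of the bottom edge. This shows that $\rho$ descends to a well-defined map on $\pi_1(X,x_0)$.

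That $\rho$ is a homomorphism is then immediate, since concatenation of loops concatenates the two ordered products. Changing the base chart $U_0$ conjugates the entire product by a fixed element of $G$, which is precisely why the unbased invariant must land in $Hom(\pi_1(X),G)/G$; imposing the based condition $g_{pp_0}=1$ from Definition \ref{basedbundle} removes this ambiguity and yields an honest element of $Hom(\pi_1(X),G)$, giving the based version with no extra work.

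For the converse I would start from a homomorphism $\phi:\pi_1(X,x_0)\to G$ and realize it as an associated bundle over the universal cover. Let $p:\tilde X\to X$ be the universal cover with deck group $\pi_1(X)$, and set $P_\phi=(\tilde X\times F)/\pi_1(X)$, where $\gamma\in\pi_1(X)$ acts by deck transformation on $\tilde X$ and through $\phi(\gamma)$ on $F$. Choosing an evenly covered cover $\{U_i\}$ of $X$ together with local sections of $\tilde X$ over each $U_i$ produces transition functions valued in the discrete subgroup $\phi(\pi_1(X))$, hence locally constant, so $P_\phi$ is a flat $G$-bundle with fiber $F$; tracing a loop through these sections recovers $\phi$ as its holonomy, so the two constructions are mutually inverse. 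Equivalently, one may feed these locally constant transition functions directly into the Bundle construction theorem to build $P_\phi$ without invoking the universal cover, which keeps the entire argument inside the transition-function framework already established.
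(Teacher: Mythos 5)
Your proposal is correct, and its converse direction is essentially identical to the paper's: both build the associated bundle $X'\times_\rho F$ (your $P_\phi=(\tilde X\times F)/\pi_1(X)$) from the deck action twisted by the representation, and observe that discreteness of $\pi_1(X)$ makes the resulting transition functions locally constant. The forward direction, however, is organized differently. The paper first proves Lemma \ref{lem:ma232} --- the pullback of a flat bundle to the universal cover $X'$ is trivial --- via the transition-function product $\Psi$ of the appendix, and then obtains the homomorphism by applying that product map $\psi$ to \emph{lifts} of loops, so that well-definedness on $\pi_1(X,x_0)$ is inherited from path-independence in the simply connected cover. You instead define the \v{C}ech holonomy directly on $X$ by a Lebesgue-number subdivision of the loop and prove homotopy invariance by tiling the homotopy square with chart-sized rectangles and cancelling interior edges. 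The engine is the same in both (local constancy, the cocycle identity, and a local deformation argument --- compare the appendix's perturbation of $C$ to $C'$ ``three sets at a time''), but your version keeps the forward direction entirely on the base and never needs the universal cover there, which is slightly more self-contained; the paper's route has the advantage that Lemma \ref{lem:ma232} is reused later, in the translation-of-fibers construction $h_C$ of Section \ref{gamma}. Two minor cautions: since the transition functions are only \emph{locally} constant, your products must be pinned to fixed connected components of the overlaps (your refinement step implicitly handles this, but it is worth saying); and your claim that the boundary holonomy of a single small rectangle ``collapses to the identity'' should really be phrased as invariance of the product under an elementary move of the lattice path within one chart, since the edges of that rectangle may carry different charts. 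Your observation that imposing $g_{pp_0}=1$ kills the conjugation ambiguity and yields the based statement matches the paper's treatment of $PREP_b(T)$.
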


The following lemma and its proof is standard material but fundamental for us. For completeness sake we present the proof in the appendix.

\begin{lem}\label{lem:ma232}
Let $P$ be a flat G-bundle over $X$ with fiber $F$ and let $X'$ be the universal cover of $X$ with the covering map $f:X'\rightarrow X$. Then, the pullback bundle $f^*(P)$ is trivial.
\end{lem}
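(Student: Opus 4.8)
The plan is to show that flatness provides locally constant transition functions and that simple connectivity of $X'$ lets us kill the resulting cocycle, producing a global trivialization via the Bundle isomorphism lemma. First I would fix a trivializing cover $\{U_i\}$ of $X$ for $P$; by flatness its transition functions $h_{ji}$ are locally constant. Refining $\{U_i\}$ to evenly covered open sets (a refinement of a trivializing cover is still trivializing), the preimage $f^{-1}(U_i)$ splits into sheets, each mapped homeomorphically onto $U_i$ by $f$. These sheets form an open cover $\{V_\alpha\}$ of $X'$, and the pulled-back transition functions $H_{\beta\alpha}=h_{ji}\circ f$ are again locally constant and satisfy the same cocycle identity $H_{\gamma\beta}H_{\beta\alpha}=H_{\gamma\alpha}$ on triple overlaps. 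Thus $f^*(P)$ is again flat, and it suffices to exhibit locally constant $\lambda_\alpha:V_\alpha\rightarrow G$ with $H_{\beta\alpha}=\lambda_\beta\lambda_\alpha^{-1}$; by equation \eqref{eq:eqvl} the transformed transition functions $\lambda_\beta^{-1}H_{\beta\alpha}\lambda_\alpha$ are then all equal to the identity, so $f^*(P)\cong X'\times F$ is trivial.

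To build the $\lambda_\alpha$, I would fix a basepoint $x_0'\in X'$ and, for each $y\in X'$, form the path-ordered product of transition functions along a path $\gamma$ from $x_0'$ to $y$: subdividing the path so that consecutive pieces lie in charts $V_{\alpha_0},\dots,V_{\alpha_n}$, set $g(\gamma)=H_{\alpha_n\alpha_{n-1}}\cdots H_{\alpha_1\alpha_0}$, evaluated at the successive transition points. Local constancy of the $H$'s together with the cocycle identity shows that $g(\gamma)$ is unchanged under refining the subdivision or altering the intermediate chart choices, so it depends only on the path. Defining $\lambda_\alpha(y)$ to be this product for a path terminating inside $V_\alpha$ gives a locally constant function on each chart, and comparing a path ending in $V_\alpha$ with its extension into $V_\beta$ appends the factor $H_{\beta\alpha}(y)$, yielding exactly $H_{\beta\alpha}(y)=\lambda_\beta(y)\lambda_\alpha(y)^{-1}$ on $V_\alpha\cap V_\beta$, the coboundary relation required above.

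The step that genuinely uses the hypothesis, and the one I expect to be the main obstacle, is showing that $g(\gamma)$ depends only on the endpoint $y$ and not on the chosen path; this is what makes $\lambda_\alpha$ well defined. The argument is to prove that $g$ is invariant under homotopies rel endpoints: given a homotopy $h:[0,1]^2\rightarrow X'$, one subdivides the square finely enough that each small cell maps into a single chart, and then checks that passing from one horizontal slice to the next leaves the product unchanged, again by local constancy and the triple-overlap cocycle identity. Since $X'$ is simply connected, any two paths from $x_0'$ to $y$ are homotopic rel endpoints, so $g(\gamma)=g(y)$ is a function of $y$ alone.

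With well-definedness in hand, the remaining verifications (local constancy of each $\lambda_\alpha$ and the coboundary relation) are routine, and the trivialization of $f^*(P)$ follows from the Bundle isomorphism lemma. I would relegate the detailed combinatorics of the subdivision and the cocycle telescoping to the appendix, as the excerpt indicates, since these are standard and obscure the single essential point, namely that simple connectivity removes the only obstruction to integrating the locally constant cocycle to a coboundary.
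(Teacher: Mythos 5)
Your proposal is correct and follows essentially the same route as the paper's appendix proof: both form the path-ordered product of the locally constant transition functions along a path from a fixed basepoint, use the cocycle condition plus simple connectivity of $X'$ to get path independence, and thereby trivialize $f^*(P)$. The only difference is packaging --- the paper assembles the products directly into a global fiberwise homeomorphism $\Psi: f^*(P)\rightarrow F$, whereas you record them as a coboundary $\lambda_\alpha$ and invoke the Bundle isomorphism lemma, which amounts to the same thing.
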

\begin{diagram}
f^*(P)      &\rTo^{f'}& P         \\
\dTo^{\pi'} &         & \dTo^{\pi}\\
X'          &\rTo^{f} & X
\end{diagram}



\begin{figure}
\includegraphics[width=95mm]{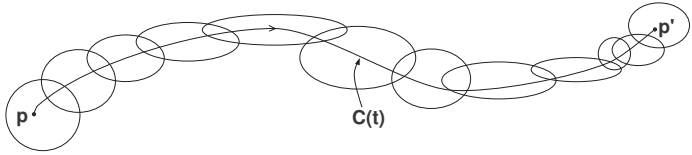} \caption{Multiplying transition functions}
\end{figure}

Given a curve in $X'$, if we just consider the multiplication of the transition functions, see figure 1, then we obtain a map $\psi$ from curves in $X'$ to $G$. Now consider the fundamental group $\pi_1(X, x_0)$ of $X$ based at $x_0\in X$. If we consider lifts of elements of $\pi_1(X, x_0)$ to a $p\in X'$ where $f(p)=x_0$ then $\psi$ will give us a homomorphism from $\pi_1(X, x_0)$ to $G$. It is easy to see that if we change the base point of $\pi_1(X, x_0)$ to a $x_1$ then the homomorphism will change by an inner automorphism of $G$. Hence, we proved the first half of Theorem \ref{thm:exercise}.

For the converse, let $\rho: \pi_1(X)\rightarrow G$ be a homomorphism, let $X'$ be the universal cover of $X$ with the covering map $f$ and let $G$ act on $F$. Define an equivalence relation on $X'\times F$ by $(x', a)\leftrightarrow(x'\cdot \alpha, \rho(\alpha)^{-1}\cdot a)$ for every $\alpha\in\pi_1(X)$ where the action on the first slot is the usual monodromy action of the $\pi_1(X)$ on the fibers of the universal cover $X'$. Modding out by this equivalence relation gives us the associated bundle $X'\times_\rho F$ with the group $G$ and fiber $F$. Since the transition functions of this bundle is given by $\rho(\alpha)$ and since $\pi_1(X)$ is discrete, $X'\times_\rho F$ is flat and this proves the other half of Theorem \ref{thm:exercise}.

This theorem shows that we have an onto map $\gamma$ from the set $S$ of all flat G-bundles over $X$ to $Hom(\pi_1(X), G)$: $$\gamma: S\rightarrow Hom(\pi_1(X), G).$$ We remarked above that changing the base point of the fundamental group changes this map  by an inner automorphism of $G$. Keeping the base point fixed but changing the initial choice of the trivializing chart around this point changes the homomorphisms in the same way. To see this recall that given a curve $C$ in $X'$, $\Psi$(the map in the proof of Lemma 2.3.2) does not depend on the middle open sets that cover $C$ and hence $\psi$ doesn't depend on them too. But if $C$ is a lift of an $\alpha\in\pi_1(X, x_0)$ then the end points of $C$ will live in the lift of the same open set $U_0$ around $x_0$. If we choose a different chart $U_1$ around $x_0$, then by the co-cycle condition we get $\tilde{\psi}=g_{10}\cdot\psi \cdot g^{-1}_{10}$ where $g_{10}$ is the transition function induced by $U_0$ and $U_1$. Hence, $\psi$ is determined up to conjugacy in $G$. Modding $Hom(\pi_1(X), G)$ by $G$ takes care of both of these choice issues. So, let $\chi(P)$ denote the conjugacy class in $G$ determined by the bundle $P$. Then, we have shown that each flat G-bundle $P$ over $X$ determines a unique conjugacy class $\chi(P)$ in $G$ and we have an onto map $\Gamma$ from the set of all flat G-bundles over $X$ to $Hom(\pi_1(X), G)/G$:$$\Gamma:S \rightarrow Hom(\pi_1(X), G)/G.$$

The forward direction of the following lemma will give us the constancy of $\Gamma$ on associated bundles and the converse direction of the lemma will show us that $\Gamma$ is one-to-one if we consider it on the right domain(the set of flat \textit{principle} G-bundles not all flat G-bundles). In the proof of this lemma we will use the following construction:

The fact that the pullback bundle is a product bundle lets us define a \textit{translation of fibers} over $X$. Let $C:I\rightarrow X$ be a curve connecting $x$ to $\tilde{x}$ in $X$ and let $C'$ be its lift to the universal cover $X'$ of $X$. Let $g\in G$ be associated to the path $C'$ given by $\psi$ as above and let $\Omega: f^*(P)\rightarrow X'\times F$ be the equivalence we obtained in Lemma \ref{lem:ma232}. 
\begin{diagram}
X'\times F \cong f^*(P)      &\rTo^{f'}& P         \\
\dTo^{\pi'} 								 &         & \dTo^{\pi}\\
X'          								 &\rTo^{f} & X
\end{diagram}
Since $f^*(P)$ is a pull back bundle, for any points $x'$, $x$ such that $f(x')=x$, the fibers over them are the same. So we have the following diagram:

\begin{diagram}
 \pi'^{-1}(x')&\rTo^{\Omega} &\{x'\}\times F &\rTo^{g}&\{\tilde{x}'\}\times F &\rTo^{\Omega^{-1}}&\tilde{\pi}^{-1}(\tilde{x}')\\
 \uTo&& && &&\dTo\\
\pi^{-1}(x) && \rTo^{h_C} && &&\pi^{-1}(\tilde{x}),
\end{diagram}
where the $g$ on the top row is the map obtained by the right action of $G$ on the fibers. Taking the composition of these maps we have the map $$h_C:\pi^{-1}(x)\rightarrow \pi^{-1}(\tilde{x}).$$

\begin{lem}
Two flat G-bundles $P$ and $P'$ over $X$ with fiber $F$ are associated if and only if $\chi(P)=\chi(P')$.
\end{lem}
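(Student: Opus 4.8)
The plan is to prove both implications through the holonomy description of $\chi$ together with the associated-bundle construction set up just before the statement. Recall that $\chi(P)$ is the conjugacy class of the homomorphism $\psi:\pi_1(X,x_0)\to G$ obtained by multiplying the locally constant, $G$-valued transition functions $h_{ji}$ of $P$ along lifted loops, and that re-choosing the trivialization at the base point conjugates $\psi$. I read the statement ``$P$ and $P'$ are associated'' as: both arise from a common principal $G$-bundle via the associated-bundle construction, equivalently their $G$-valued transition cocycles $\{h_{ji}\}$ and $\{\grave{h}_{ji}\}$ are cohomologous in the sense of the Bundle isomorphism lemma, i.e. there exist PE functions $\lambda_i$ with $\grave{h}_{ji}=\lambda_j^{-1}h_{ji}\lambda_i$.

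For the forward direction I would assume $P$ and $P'$ are associated and trace $\psi$ and $\psi'$ along the same lifted generators of $\pi_1(X,x_0)$. Since $\psi$ is built purely by multiplying transition functions and, as noted just before the lemma, is insensitive to the intermediate charts by the cocycle condition, substituting $\grave{h}_{ji}=\lambda_j^{-1}h_{ji}\lambda_i$ at each chart crossing makes the interior factors $\lambda_k^{-1}\lambda_k$ telescope, leaving $\psi'=\lambda_0^{-1}\psi\lambda_0$. Hence $\psi$ and $\psi'$ are conjugate in $G$ and $\chi(P)=\chi(P')$, which is exactly the asserted constancy of $\Gamma$ on association classes.

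For the converse I would assume $\chi(P)=\chi(P')$, so after fixing base data $\psi'=g\,\psi\,g^{-1}$ for some $g\in G$. First I would invoke Lemma \ref{lem:ma232} to trivialize both pullbacks $f^*P$ and $f^*P'$ over the universal cover $X'$, and use the translation-of-fibers maps $h_C$ to identify the parallel transport of each flat bundle with its monodromy action; by the converse half of Theorem \ref{thm:exercise} this exhibits $P$ as associated to $X'\times_\psi F$ and $P'$ as associated to $X'\times_{\psi'}F$. The conjugating element then gives a map $X'\times F\to X'\times F$, $(x',v)\mapsto(x',g\cdot v)$, which intertwines the two monodromy equivalence relations precisely because $\psi'=g\psi g^{-1}$ and therefore descends to a $G$-bundle isomorphism $X'\times_\psi F\to X'\times_{\psi'}F$. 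Since cohomology of cocycles is transitive, this chains together to show $P$ and $P'$ are associated.

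The hard part will be the converse, and specifically two points. The first is verifying that the descended map is well defined and fiber-preserving, i.e. that conjugation by $g$ genuinely carries the $\psi$-monodromy action into the $\psi'$-monodromy action equivariantly rather than only on representatives. The second, and the real crux, is explaining why the restriction to \emph{principal} bundles is what upgrades this to injectivity of $\Gamma$: for a general fiber the action $G\to\mathrm{Homeo}(F)$ may have a kernel, so $\chi$ need not recover the $G$-cocycle and two non-associated bundles could share a value of $\chi$; for principal bundles the action is left multiplication, which is free, so the cocycle is reconstructed exactly from the parallel transport $h_C$ and the equivalence $\chi(P)=\chi(P')\Leftrightarrow P\sim P'$ becomes tight.
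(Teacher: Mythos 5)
Your converse direction is correct and is essentially the paper's argument in different packaging. The paper builds the isomorphism downstairs, fiber by fiber: it sets $h_x = h'_C\phi'_{x_0}\phi_{x_0}^{-1}h_C^{-1}$ for a path $C$ from $x_0$ to $x$ and spends the bulk of the proof verifying that $h_x$ is independent of $C$, using $\psi=\psi'$. You instead identify $P\cong X'\times_\psi F$ and $P'\cong X'\times_{\psi'}F$ over the universal cover and descend the map $(x',v)\mapsto(x',g\cdot v)$; the equivariance check $\psi'(\alpha)^{-1}gv=g\psi(\alpha)^{-1}v$ is exactly where the paper's path-independence computation lives, so the two proofs carry the same content. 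Your version concentrates the work into one clean descent computation (plus the standard fact, which you correctly flag as needing the $h_C$'s, that a flat bundle is recovered from its own monodromy), while the paper's version avoids having to establish that round-trip identification. Your closing remark that principality (faithfulness of the $G$-action on the fiber) is what makes $\Gamma$ injective matches the paper's discussion preceding the lemma.

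The one genuine gap is in your forward direction. You take ``associated'' to mean that the $G$-cocycles are cohomologous via arbitrary continuous $\lambda_i$, and then claim the interior factors telescope. They do not in general: along a chain of charts $U_0,\dots,U_n$ covering the lifted loop, the substitution produces each $\lambda_k$ evaluated at two \emph{different} overlap points of $U_k$, and these cancel only if $\lambda_k$ is constant on the portion of the path lying in $U_k$. For discrete or finite $G$ (the case the paper actually computes with) continuity forces every $\lambda_k$ to be locally constant and your telescoping goes through; for connected $G$ it fails, and so does the broadened statement (two flat $U(1)$-bundles over $S^1$ with non-conjugate holonomy are isomorphic as bundles). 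The paper sidesteps this entirely by reading ``associated'' in the narrow Steenrod sense of a bundle and the bundle built from the \emph{same} transition functions with a different fiber, whence its forward direction is immediate (``the transition functions are the same in both cases''). You should either add the hypothesis that the $\lambda_i$ are locally constant (i.e.\ the equivalence is one of flat bundles), or note that you are assuming $G$ discrete, or adopt the paper's literal reading of ``associated.''
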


\begin{proof}
Let $P$, $P'$ be two flat principal G-bundles over $X$ such that $\chi(P)=\chi(P')$. 
\begin{diagram}
P & & P'\\
 & \rdTo_{\pi} &  \dTo_{\pi'}\\
 & &X
\end{diagram}
Let $x$ be any point in $X$ and let $C:I\rightarrow X$ be a curve connecting $x_0$ to $x$. Choose charts around $x_0$ and $x$ to get the translation of the fibers
\begin{eqnarray}
h_C    &:& \pi^{-1}(x_0) \rightarrow \pi^{-1}(x)\\
h'_C   &:& \pi'^{-1}(x_0)\rightarrow \pi'^{-1}(x),
\end{eqnarray}
and the homomorphisms $\psi$, $\psi':\pi_1(X, x_0)\rightarrow G$. Define $$h_x:\pi^{-1}(x)\rightarrow \pi'^{-1}(x)$$ by $$h_x:= h'_C\phi'_{x_0}\phi^{-1}_{x_0}h^{-1}_C,$$ where, as usual, $\phi$\;s are the homeomorphisms of the fibers with the group $G$ given by the trivializing charts around $x_0$ for $P$ and $P'$. Since $\chi(P)=\chi(P')$, we can choose $\psi$ and $\psi'$ such that $\psi=\psi'$. We will use this fact to show that $h_x$ is independent of the path $C$ we choose.

Let $D$ be another curve connecting $x_0$ to $x$ and let $h_{x,D}$ be the map obtained from $D$. Then,

\begin{eqnarray}
h^{-1}_x h_{x, D} &=& \left(h_C\phi_{x_0}\phi'^{-1}_{x_0}h'^{-1}_C\right)\left(h'_D\phi'_{x_0}\phi^{-1}_{x_0}h_{D}^{-1}\right)\\
                  &=& h_C\phi_{x_0}\left(\phi'^{-1}_{x_0}(h'_{C^{-1}D})\phi'_{x_0}\right)\phi^{-1}_{x_0}h_{D}^{-1}\\
                  &=& h_C\phi_{x_0}\left(\phi'^{-1}_{x_0}(h'^{-1}_{D^{-1}C})\phi'_{x_0}\right)\phi^{-1}_{x_0}h_{D}^{-1}\\
                  &=& h_C\phi_{x_0}\psi'(D^{-1}C)\phi^{-1}_{x_0}h_{D}^{-1}\\
                  &=& h_C\phi_{x_0}\psi(D^{-1}C)\phi^{-1}_{x_0}h_{D}^{-1}\\
                   &=& h_C\phi_{x_0}\left(\phi^{-1}_{x_0}(h^{-1}_{D^{-1}C})\phi_{x_0}\right)\phi^{-1}_{x_0}h_{D}^{-1}\\
                  &=& h_C\phi_{x_0}\left(\phi^{-1}_{x_0}h_{C}^{-1}h_{D}\phi_{x_0}\right)\phi^{-1}_{x_0}h_{D}^{-1}\\
                  &=& id_{\pi^{-1}(x)}.
\end{eqnarray}
Hence, $h_x$ is independent of the curve connecting $x_0$ to $x$. Define $h:P\rightarrow P'$ by $h(b)=h_x(b)$ where $\pi(b)=x$. It is not hard to show that $h$ so defined satisfies the two conditions of being an equivalence of bundles.

For the other direction, given a flat G-bundle $P$ over $X$ and its associated bundle $P'$, we have to show that $\chi(P)=\chi(P')$. But this is trivial since we will be using the same universal cover of $X$ and hence the transition functions are the same in both cases.
\end{proof}

Hence, we finally have the

\begin{cor}
The set of all flat principal bundles with  group $G$ on $K_t$ is isomorphic to $Hom(\pi_1(K_t), G)/G$.
\end{cor}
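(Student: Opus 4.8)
The plan is to obtain the corollary directly by specializing the results of this section to $X=K_t$ and then restricting to principal bundles. Recall that Theorem~\ref{thm:exercise} together with the discussion following it already furnishes a well-defined surjection
\begin{equation*}
\Gamma: S \longrightarrow Hom(\pi_1(K_t), G)/G, \qquad \Gamma(P)=\chi(P),
\end{equation*}
from the set $S$ of all flat $G$-bundles over $K_t$, the passage from $\gamma$ to $\Gamma$ having absorbed the ambiguity coming from the choice of base point and of the initial trivializing chart. First I would cut down $S$ to the subset of flat \emph{principal} $G$-bundles (fiber $F=G$ with left multiplication) and reinterpret the corollary as the statement that $\Gamma$ induces a bijection between isomorphism classes of such bundles and $Hom(\pi_1(K_t), G)/G$.

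Surjectivity is immediate from the converse half of Theorem~\ref{thm:exercise}: given any homomorphism $\rho:\pi_1(K_t)\to G$, running the associated-bundle construction with fiber $F=G$ under left multiplication produces a flat principal $G$-bundle whose conjugacy class is exactly $[\rho]$. Hence every class in $Hom(\pi_1(K_t), G)/G$ is realized by a flat principal bundle.

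For injectivity on isomorphism classes I would invoke the preceding Lemma, which says $\chi(P)=\chi(P')$ if and only if $P$ and $P'$ are associated. In the principal case I would check that the fibrewise map $h:P\to P'$ produced in that Lemma's proof (using a common choice $\psi=\psi'$) intertwines the right $G$-actions, so that ``associated'' upgrades to an honest isomorphism of principal bundles; thus $\chi(P)=\chi(P')$ forces $P\cong P'$. For the reverse implication, any isomorphism of flat principal bundles sends the transition cocycle of $P$ to that of $P'$ via the equivalence~\eqref{eq:eqvl}, and since $\chi$ is computed from that cocycle it is preserved. Together with surjectivity, this shows that $\Gamma$ descends to the desired bijection.

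The step I expect to be the main obstacle is the equivariance check that makes ``associated'' coincide with ``isomorphic'' for principal bundles: one must verify that the translation-of-fibers maps and the trivializations $\phi_{x_0}, \phi'_{x_0}$ assemble into a map $h$ commuting with the right $G$-action, which is precisely where the principal structure (rather than a general fiber $F$) is used. This is the content of the narrative's remark that $\Gamma$ is one-to-one only on the ``right domain'' of flat \emph{principal} $G$-bundles; the remaining well-definedness issues were already dispatched by the conjugation argument underlying the definition of $\Gamma$.
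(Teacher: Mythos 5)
Your proposal follows essentially the same route as the paper: the paper presents this corollary as the immediate consequence of Theorem~\ref{thm:exercise} (which gives the surjection $\Gamma$ onto $Hom(\pi_1(K_t),G)/G$) together with the preceding lemma (which makes $\Gamma$ injective once one restricts to flat \emph{principal} bundles, where ``associated'' coincides with ``isomorphic''). Your flagged concern about the right-$G$-equivariance of the map $h$ is exactly the point the paper itself leaves to the reader (``It is not hard to show that $h$ so defined satisfies the two conditions of being an equivalence of bundles''), so the level of detail matches.
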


Recall that we were modding out $Hom(\pi_1(X), G)$ by $G$ because of the double dependence of the map $\gamma$ on the base point $p_0$ of $\pi_1(X)$ and on the triviliazing neighborhood $U_{p_0}$ around $p_0$. If we fix this base point and require all the transition functions to be $g_{pp_0}=1$, then using the definitions \ref{basedbundle} and \ref{basedprep}, it is not hard to modify the proofs given in this section to get the following:
\begin{thm}
The set of all flat principal based G-bundles over $X$ are isomorphic to $Hom(\pi_1(X), G)$.
\end{thm}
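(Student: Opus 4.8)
The plan is to adapt the entire machinery of this section — Theorem \ref{thm:exercise} and the lemma on association — to the based setting, where the only change is that we no longer quotient by conjugation. The key observation is that modding out by $G$ in the unbased theory arose from exactly two sources of ambiguity: the choice of base point for $\pi_1(X)$, and the choice of trivializing chart $U_{p_0}$ around that base point. By fixing a base point $p_0$ once and for all, and by imposing the based condition $g_{pp_0}=1$ (so the trivialization over $U_{p_0}$ is the identity), both ambiguities are eliminated, and the map $\gamma: S \to Hom(\pi_1(X),G)$ becomes well-defined on the nose rather than only up to conjugacy.

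First I would re-examine the construction of $\gamma$ from the proof of Theorem \ref{thm:exercise}: given a based flat $G$-bundle $(P,p_0)$, lift each loop $\alpha \in \pi_1(X,x_0)$ to a path in the universal cover and multiply transition functions along it to obtain $\psi(\alpha)\in G$. The argument that $\psi$ is a homomorphism carries over verbatim, since it depends only on the cocycle condition and the flatness (local constancy) of the transition functions. I would then verify surjectivity exactly as in the converse of Theorem \ref{thm:exercise}: given any $\rho:\pi_1(X)\to G$, the associated bundle $X'\times_\rho F$ is flat, and it can be given a based structure by choosing the trivialization over the chart containing $p_0$ to be the identity, which is always possible by the bundle isomorphism lemma (equation \eqref{eq:eqvl}) applied with $\lambda_{p_0}$ absorbing the transition function $g_{pp_0}$.

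The substantive step is injectivity. I would run the same argument as in the association lemma: given two based bundles $P$, $P'$ with $\psi = \psi'$ (now an honest equality of homomorphisms, not merely of conjugacy classes), construct the fiber translations $h_C$, $h'_C$ and define $h_x := h'_C \phi'_{x_0}\phi^{-1}_{x_0} h^{-1}_C$. The path-independence computation goes through identically, yielding a bundle equivalence $h:P\to P'$. The one point requiring care is that $h$ must respect the based structure, i.e.\ it must fix $p_0$; but because both bundles have $g_{pp_0}=1$, the trivializations $\phi_{x_0}$ and $\phi'_{x_0}$ over the base chart are the identity, so $h_{x_0}$ reduces to the identity on the fiber over $x_0$ and in particular sends $p_0$ to $p_0'$ compatibly. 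Conversely, two equivalent based bundles necessarily induce the same $\psi$ since a based equivalence cannot introduce a conjugating factor at the base point.

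I expect the main obstacle to be the bookkeeping around the base point: one must check that the based condition genuinely removes the conjugation freedom without accidentally introducing a different rigidity, and that the representative $\rho$ produced in the surjectivity step can be realized by a bundle that is simultaneously flat \emph{and} based. The resolution is that the based normalization $g_{pp_0}=1$ is precisely a choice of $\lambda_{p_0}$ in the bundle isomorphism lemma, and the remaining gauge freedom $\{\lambda_j\}_{j\neq p_0}$ is exactly enough to reach any conjugate of a given representation \emph{except} that the anchoring at $p_0$ pins down the conjugacy class to a single homomorphism. Thus the quotient by $G$ disappears and we obtain a bijection with $Hom(\pi_1(X),G)$ rather than $Hom(\pi_1(X),G)/G$, as claimed.
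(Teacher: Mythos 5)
Your proposal is correct and follows exactly the route the paper intends: the paper itself only remarks that one fixes the base point, imposes $g_{pp_0}=1$ to kill the two sources of conjugation ambiguity, and then reruns the arguments of Theorem \ref{thm:exercise} and the association lemma with $\psi=\psi'$ as an honest equality rather than an equality of conjugacy classes. Your write-up simply supplies the details the paper leaves as ``not hard to modify,'' including the correct observation that the based condition makes the equivalence $h$ restrict to the identity over the base chart.
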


\begin{cor}
The set of all flat principal based bundles with  group $G$ on $K_t$ is isomorphic to $Hom(\pi_1(K_t), G)$.
\end{cor}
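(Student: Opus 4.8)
The corollary is the case $X=K_t$ of the preceding theorem, so the plan is first to observe that $K_t$ is an admissible base space and then to indicate how the based version of the theorem is obtained from the arguments already given in this section. Since each G\"{a}hler approximant $K_t$ is path-connected and admits a universal cover with a well-defined fundamental group, all the hypotheses used in Lemma \ref{lem:ma232} and Theorem \ref{thm:exercise} are met. Once the based theorem is in hand, substituting $X=K_t$ gives the statement immediately, so the real content is the based theorem itself.

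To establish that theorem I would retrace the construction of the map $\Gamma$ while using the distinguished chart $U_{p_0}$ at the base point $p_0$. Recall that in the unbased setting the homomorphism attached to a bundle was well-defined only up to conjugacy in $G$, and that the two sources of this ambiguity were the choice of base point of $\pi_1(X,x_0)$ and the choice of trivializing chart around it. For a based bundle both choices are fixed: the base point $p_0$ is part of the data and the normalization $g_{pp_0}=1$ pins down the chart. Multiplying transition functions along lifts of loops, exactly as in the proof of Theorem \ref{thm:exercise}, therefore yields a map $\Gamma_b$ from based flat principal $G$-bundles directly into $Hom(\pi_1(K_t),G)$, with no quotient by conjugation.

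Surjectivity of $\Gamma_b$ is handled by the same construction as the converse half of Theorem \ref{thm:exercise}: given $\rho\colon\pi_1(K_t)\to G$, form the associated bundle $K_t'\times_\rho G$ (with $K_t'$ the universal cover), take the image of $p_0$ as base point, and note that the chart inherited from the universal cover has transition functions equal to $1$ near $p_0$, so the bundle is based and $\Gamma_b$ sends it to $\rho$. For injectivity I would run the fiber-translation argument of the associated-bundle lemma: if two based bundles $P$ and $P'$ satisfy $\psi=\psi'$ (equality in $Hom$, not merely conjugacy), the path-independent maps $h_x$ assemble into an isomorphism $h\colon P\to P'$. The new point to check is that $h$ respects the base structure, i.e.\ that $h_{x_0}=\mathrm{id}$ on $\pi^{-1}(x_0)$; this holds because the based condition forces the trivializing homeomorphisms $\phi_{x_0}$ and $\phi'_{x_0}$ at $p_0$ to agree, so the correction term $\phi'_{x_0}\phi^{-1}_{x_0}$ appearing in the definition of $h_{x_0}$ is the identity.

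The main obstacle is precisely this last verification: confirming that the normalization $g_{pp_0}=1$ removes \emph{both} conjugation ambiguities simultaneously, so that $\Gamma_b$ is injective on the nose rather than only up to conjugacy. Concretely, one must check that fixing $p_0$ together with the chart $U_{p_0}$ makes the constant that conjugated $\psi$ in the unbased discussion equal to $1$, and that the resulting bundle isomorphism is basepoint-preserving. Everything else is a routine transcription of the unbased proofs, after which the corollary follows by setting $X=K_t$.
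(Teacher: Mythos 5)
Your proposal is correct and follows essentially the same route as the paper: the paper also obtains the corollary by specializing $X=K_t$ in the based theorem, and justifies that theorem only by remarking that the unbased arguments of the section are ``not hard to modify'' once the base point and the normalized chart $g_{pp_0}=1$ are fixed. You have simply carried out that modification explicitly (well-definedness of the map without conjugation, surjectivity via the associated-bundle construction, injectivity via the fiber-translation argument with the check at $p_0$), which is more detail than the paper provides but the same argument.
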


\section{The Main Result}\label{mainresult}

Using the results we developed so far, we can now obtain the main result we want. Let $B_t$ be the set of all flat PE principal bundles with radius $t$ over $T$ and let $S_t$ be the set of all flat bundles over the approximant $K_t$ for the tiling $T$. Choose a sequence of radii $t_1\leq t_2\leq \ldots$ going to infinity. Then, we have the following commutative diagram:

\begin{diagram}
B_{t_1}     & \rInto^{i} & B_{t_2}      & \rInto^{i}    & B_{t_3}     & \rInto^{i}   & \cdots\\
\uTo_{\pi^*}  &            & \uTo_{\pi^*}   &             & \uTo_{\pi^*}  &            & \cdots\\
S_{t_1}     & \rTo_{f^*} & S_{t_2}		  & \rTo_{f^*}  & S_{t_3}	  	& \rTo_{f^*} & \cdots  ,
\end{diagram}
where $i$ is the inclusion map, $\pi^*$ is the pullback map obtained from the projection map and the $f^*$ is the pullback map of bundles induced by the forgetful map $f$ between the approximants $K_t$. Now let $\tilde{B}_t$ be the set of $t$ isomorphism classes of flat PE principal bundles with radius $t$ and let $\tilde{S}_t$ be the isomorphism classes of flat principal bundles over $K_t$. Inclusion maps $i$ induce the maps $\tilde{i}:\tilde{B}_s\rightarrow \tilde{B}_t$ for any $s\leq t$, by taking a representative from an equivalence class in $\tilde{B}_s$, mapping it by $i$ to $B_t$ and then taking the equivalence class of that in $\tilde{B}_t$. Similar considerations with the other maps in the above diagram will give us the following induced diagram:

\begin{diagram}
\tilde{B}_{t_1}     & \rInto^{\tilde{i}} & \tilde{B}_{t_2}      & \rInto^{\tilde{i}}    & \tilde{B}_{t_3}     & \rInto^{\tilde{i}}   & \cdots\\
\uTo_{\tilde{\pi}^*}  			&            & \uTo_{\tilde{\pi}^*}   	&           & \uTo_{\tilde{\pi}^*}	&    & \cdots\\
\tilde{S}_{t_1}     & \rTo_{\tilde{f^*}} & \tilde{S}_{t_2}& \rTo_{\tilde{f^*}} & \tilde{S}_{t_3} & \rTo_{\tilde{f^*}} & \cdots
\end{diagram}

Recall that we defined $PREP(T)$ by taking all flat principal PE bundles over $T$ and by modding this out by the equivalence relation given by identifying two bundles with radii $s$ and $t$ if they are eventually $t'$ isomorphic for some $t'$ bigger than both $s$ and $t$. Using the $\tilde{B}$s and $\tilde{i}$s above, this equivalence relation immediately becomes the direct limit:$$PREP(T)=\varinjlim_{\tilde{i}} \tilde{B}_{t_i}.$$ Let $\tilde{S}^*_t$ be the pullback of $\tilde{S}_t$ to $T$ by the projection map $\pi_t$. From section 2.2 we know that $\tilde{B}_t = \tilde{S}^*_t$. Now we can establish the main result we are after:

\begin{thm}
Let $T$ be a tiling space and $K_t$, $t\in\mathbb{R}$ be its G\"{a}hler approximants. Let $t_1\leq t_2\leq...$ be an increasing sequence and $f:K_{t_i}\rightarrow K_{t_j}$ be the forgetful map where $t_i$ is bigger than $t_j$. Then,
  \begin{equation}
  PREP(T) = \varinjlim_{f^*} Hom(\pi_1(K_{t_n}), G)/G.
  \end{equation}
\end{thm}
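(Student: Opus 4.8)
The plan is to reduce the statement to the general principle that a morphism of directed systems which is a bijection at every level induces a bijection of the direct limits, and then to identify each term appearing along the way. We already have $PREP(T) = \varinjlim_{\tilde{i}} \tilde{B}_{t_n}$ from the equivalence relation unwound just before the theorem, so it suffices to produce a compatible family of bijections between the directed system $(\tilde{S}_{t_n}, \tilde{f^*})$ on the bottom of the ladder and the system $(\tilde{B}_{t_n}, \tilde{i})$ on top, and then to replace each $\tilde{S}_{t_n}$ by $Hom(\pi_1(K_{t_n}), G)/G$.

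First I would verify that each vertical pullback map $\tilde{\pi}^*: \tilde{S}_{t_n} \to \tilde{B}_{t_n}$ is a bijection. Surjectivity is exactly Theorem \ref{thm:aaa} together with the identity $\tilde{B}_t = \tilde{S}^*_t$ recorded above: every $t$-isomorphism class of flat PE principal bundle is the pullback of a flat principal bundle on $K_t$. For injectivity I would argue that the data witnessing an isomorphism descends. If two bundles on $K_t$ pull back to $t$-isomorphic PE bundles, then by the Bundle isomorphism lemma there are PE functions $\lambda_j$ of radius $t$ implementing the equivalence; since every PE function and every PE open set of radius $t$ is the pullback of a corresponding object on $K_t$ (the two lemmas of Section \ref{bundlesapproximants}), these $\lambda_j$ descend to $K_t$ and exhibit an isomorphism there. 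Commutativity of the ladder, $\tilde{i}\circ\tilde{\pi}^* = \tilde{\pi}^*\circ\tilde{f^*}$, is already built into the diagram, so $\tilde{\pi}^*$ is an isomorphism of directed systems and hence $\varinjlim_{\tilde{i}}\tilde{B}_{t_n} \cong \varinjlim_{\tilde{f^*}}\tilde{S}_{t_n}$.

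Next I would replace the bottom system by its algebraic model. The corollary of Section \ref{gamma} identifying flat principal $G$-bundles on $K_t$ with $Hom(\pi_1(K_t),G)/G$ gives a bijection $\tilde{S}_{t_n} \cong Hom(\pi_1(K_{t_n}), G)/G$ for each $n$. The remaining point is functoriality: under these bijections the bundle-level map $\tilde{f^*}$, induced by pulling back along the forgetful map $f: K_{t_i}\to K_{t_j}$, must agree with the algebraic map given by precomposition with $f_*: \pi_1(K_{t_i}) \to \pi_1(K_{t_j})$. This follows because the conjugacy class $\chi$ attached to a flat bundle is computed by multiplying transition functions along lifted loops, so pulling a flat bundle back along $f$ pulls its defining representation back along $f_*$; tracing the construction in Theorem \ref{thm:exercise} shows that the representation of $f^*P$ is that of $P$ precomposed with $f_*$, i.e. $\chi(f^*P) = \chi(P)\circ f_*$. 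With the system maps matched, the bijections assemble into an isomorphism of directed systems and therefore $\varinjlim_{\tilde{f^*}}\tilde{S}_{t_n} \cong \varinjlim_{f^*} Hom(\pi_1(K_{t_n}), G)/G$. Chaining the three identifications yields the theorem.

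The main obstacle I expect is not the direct-limit formalism, which is routine, but the two compatibility checks in the middle step: the descent of the isomorphism functions $\lambda_j$ to the approximant, needed for injectivity of $\tilde{\pi}^*$, and the naturality statement $\chi(f^*P) = \chi(P)\circ f_*$. Both require unwinding the universal-cover and transition-function constructions of Section \ref{gamma} rather than quoting a black box, and care is needed because $f_*$ runs from the finer approximant $K_{t_i}$ to the coarser one $K_{t_j}$ while the bundle pullback $f^*$ runs the opposite way, so one must confirm that the arrows in the two directed systems genuinely point the same direction before the limits can be compared.
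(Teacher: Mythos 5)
Your proposal follows essentially the same route as the paper's proof, which is just the chain of identifications $PREP(T)=\varinjlim_{\tilde{i}}\tilde{B}_{t_n}=\varinjlim_{\tilde{i}}\tilde{S}^*_{t_n}=\varinjlim_{\tilde{f}^*}\tilde{S}_{t_n}=\varinjlim_{f^*}Hom(\pi_1(K_{t_n}),G)/G$ quoting Theorem \ref{thm:aaa} and the corollary of Section \ref{gamma}. You supply the compatibility checks (injectivity of $\tilde{\pi}^*$ via descent of the $\lambda_j$, and the naturality $\chi(f^*P)=\chi(P)\circ f_*$) that the paper leaves implicit, and these are correctly identified and correctly argued.
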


\begin{proof}
\begin{eqnarray}
PREP(T) & = & \varinjlim_{\tilde{i}} \tilde{B}_{t_n}\\
        & = & \varinjlim_{\tilde{i}}  \tilde{S}^*_t \\
        & = & \varinjlim_{\tilde{f}^*} \tilde{S}_t \\
        & = & \varinjlim_{f^*} Hom(\pi_1(K_{t_n}), G)/G.
\end{eqnarray}
\end{proof}

The corresponding result for the $PREP_b(T)$ is as follows:
\begin{thm}
Let $T$ be a tiling space and $K_t$, $t\in\mathbb{R}$ be its G\"{a}hler approximants. Let $t_1\leq t_2\leq...$ be an increasing sequence and $f:K_{t_i}\rightarrow K_{t_j}$ be the forgetful map where $t_i$ is bigger than $t_j$. Then,
  \begin{equation}
  PREP_b(T) = \varinjlim_{f^*} Hom(\pi_1(K_{t_n}), G).
  \end{equation}
\end{thm}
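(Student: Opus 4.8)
The plan is to mirror exactly the argument just given for $PREP(T)$, substituting the based versions of every object at each stage of the chain. The whole proof is a four-line computation built on top of the structural work already done in the earlier sections, so the real content is verifying that each equality in the chain survives the passage to based bundles.

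First I would set up the based analogues of the commutative diagrams that precede the $PREP(T)$ theorem. Let $B_t^b$ be the set of all flat \emph{based} PE principal bundles with radius $t$ over $T$, and let $S_t^b$ be the set of all flat based principal bundles over the approximant $K_t$. As in the unbased case, the inclusions $i$, the projection pullbacks $\pi^*$, and the forgetful pullbacks $f^*$ all restrict to these based collections: the key point to check is that each of these operations preserves the basepoint condition $g_{pp_0}=1$. For $f^*$ and $\pi^*$ this is immediate since pulling back by a map sending basepoint to basepoint sends constant transition functions to constant transition functions; for $i$ it is automatic since the bundle is unchanged. Passing to based isomorphism classes $\tilde B_t^b$ and $\tilde S_t^b$ then yields the based versions of both induced diagrams.

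Next I would assemble the chain of identifications. By Definition \ref{basedprep}, $PREP_b(T)$ is the set of based flat principal PE bundles modulo the same eventual-isomorphism relation, so exactly as before this collapses into the direct limit $\varinjlim_{\tilde i}\tilde B_{t_n}^b$. The identity $\tilde B_t^b = (\tilde S_t^b)^*$ (the based version of the statement $\tilde B_t=\tilde S_t^*$ used in the main proof) follows from Theorem \ref{thm:aaa} together with the observation that the pullback-of-bundles correspondence between PE bundles on $\mathbb{R}^2$ and bundles on $K_t$ respects basepoints. Finally the based corollary at the end of section \ref{gamma} gives $\tilde S_t^b \cong Hom(\pi_1(K_t),G)$ rather than the quotient $Hom(\pi_1(K_t),G)/G$, and this isomorphism is natural with respect to the forgetful maps $f^*$, so it commutes with taking the direct limit. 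Stringing these together produces
\begin{eqnarray}
PREP_b(T) & = & \varinjlim_{\tilde{i}} \tilde{B}^b_{t_n}\\
        & = & \varinjlim_{\tilde{i}}  (\tilde{S}^b_t)^* \\
        & = & \varinjlim_{\tilde{f}^*} \tilde{S}^b_t \\
        & = & \varinjlim_{f^*} Hom(\pi_1(K_{t_n}), G).
\end{eqnarray}

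The main obstacle, such as it is, is not any single hard step but rather the bookkeeping needed to confirm that the basepoint data is carried coherently through the entire limiting system. Concretely, I expect the one point genuinely worth spelling out is naturality of the based correspondence $\tilde S_t^b \cong Hom(\pi_1(K_t),G)$ under the forgetful maps: one must check that a consistent choice of basepoint and trivializing chart across all the $K_{t_n}$ makes the squares relating $f^*$ on bundles to the induced map on $Hom(\pi_1,G)$ commute strictly, not merely up to conjugacy. Since modding out by $G$ is precisely what absorbed these choices in the unbased proof, it is here — where that quotient is absent — that the argument requires the genuinely extra verification, and the remark in section \ref{gamma} that the section's proofs are ``easy to modify \ldots for the based bundles'' is doing exactly this work.
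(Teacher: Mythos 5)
Your proposal is correct and follows exactly the route the paper intends: the paper states this theorem immediately after the $PREP(T)$ proof without writing out a separate argument, relying on the same four-line chain with based objects substituted throughout and the based corollary of section \ref{gamma} supplying the final identification $\tilde S_t^b \cong Hom(\pi_1(K_t),G)$. Your added remark about checking strict (not merely up-to-conjugacy) commutativity of the squares relating $f^*$ to the induced maps on $Hom(\pi_1,G)$ is precisely the verification the paper leaves implicit, so your write-up is, if anything, more careful than the original.
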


\section{Invariance of the Pattern Equivariant Representation Variety}\label{invariance}

In this section we will prove the invariance of the PE representation variety of two tiling spaces under homeomorphisms of tiling spaces. The key step will be the approximation theorem below which is proven in Rand's PhD thesis \cite{Rand}. First we need a definition:

\begin{defn}
Let $f:\Omega_T\rightarrow \Omega_{T'}$ be a map between two tiling spaces. $f$ is called a \textit{local map} if there exist an $r>0$ such that for every $T_1$, $T_2$ in $\Omega_T$, we have $T_1|_r=T_2|_r$ implies $f(T_1)|_s=f(T_2)|_s$ for some $s$.
\end{defn}

\begin{thm}
Let $X$ and $Y$ be tiling spaces. If f is a continuous map from $X\rightarrow Y$, then for every $\epsilon$ sufficiently small, there exists an $f_\epsilon$ with the following properties:
  \begin{enumerate}
    \item $f_\epsilon$ is a smooth, local map from $X\rightarrow Y$,
    \item $f(T)$ and $f_\epsilon(T)$ are in the same translational leaf, and $|f(T)-f_\epsilon(T)|<\epsilon$, where $|f(T)-f_\epsilon(T)|$ denotes the           translational gap within a leaf,
    \item If $f$ commutes with translations, then $f_\epsilon$ will also commute with translations.
  \end{enumerate}
\end{thm}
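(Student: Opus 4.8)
The plan is to exploit the local product structure of tiling spaces, under which the difficulty collapses to an ordinary smoothing problem in the leaf (translational) direction. Recall that a neighbourhood of any point of a tiling space is homeomorphic to a product $C\times B$, where $C$ is a clopen piece of the transversal — a totally disconnected set recording the combinatorics of the patch around the origin — and $B$ is a ball in the leaf $\mathbb{R}^d$; the slices $\{c\}\times B$ lie inside single translational leaves, while $C\times\{b\}$ is transverse to the foliation. By compactness I would first choose finite covers of $X$ and of $Y$ by such product charts and write $f$ in these coordinates as a pair $(\tau,\beta)$, with $\tau$ valued in the transverse factor of $Y$ and $\beta$ in its leaf factor.

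The first key step is that the transverse component is essentially rigid and costs nothing. Since the target transversal is totally disconnected and each slice $\{c\}\times B$ is connected, $\tau$ is constant along leaves; by continuity and compactness it is locally constant in the transverse variable $c$ as well. Refining the covers, I may assume $\tau$ is constant on each chart. A clopen condition on the transversal is exactly a condition on the patch around the origin up to some finite radius, so after this refinement the transverse part of $f$ already depends only on a bounded patch — this is precisely the locality radius $r$ demanded by the definition of a local map. The only thing left to approximate is the leaf-displacement function $\beta$, which records how far inside its leaf the image $f(T)$ sits.

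The second step is to smooth $\beta$ in the leaf variable only, keeping the transverse datum $\tau$ untouched. On each chart $\beta$ is a continuous $\mathbb{R}^d$-valued function, so I would mollify it along the leaf direction (or approximate it by a smooth function using a smooth-along-leaves partition of unity) to obtain $\beta_\epsilon$ with $\lVert\beta_\epsilon-\beta\rVert<\epsilon$. I then define $f_\epsilon(T)$ to be $f(T)$ translated inside its own leaf by the vector $\beta_\epsilon-\beta$, of norm $<\epsilon$. Because the combinatorial coordinate $\tau$ is unchanged and only a leafwise translation of size $<\epsilon$ is applied, $f(T)$ and $f_\epsilon(T)$ automatically lie in the same translational leaf with translational gap $<\epsilon$, giving property (2); smoothness along leaves gives the smooth part of property (1); and the locality radius inherited from $\tau$ together with the now-smooth $\beta_\epsilon$ gives a genuine local map. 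For property (3), I would make every choice — the clopen transversal pieces and the leafwise mollifier — translation invariant by working in the suspension (mapping-torus) presentation of the tiling space; then the correcting vector field $T\mapsto\beta_\epsilon(T)-\beta(T)$ is translation invariant and $f_\epsilon$ inherits commutation with translations.

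The main obstacle I anticipate is the global patching in the final step: the product charts overlap, their transverse representatives (which puncture is placed at the origin) need not agree, and one must glue the local leafwise corrections into a single globally defined, smooth, translation-compatible modification while preserving a uniform locality radius $r=\max_i r_i$. The saving feature is that all the approximation freedom lives purely in the $\mathbb{R}^d$ leaf direction, where mollification is available and where translations form an abelian vector group: convex combinations of leafwise displacement vectors are unambiguous and stay of norm $<\epsilon$, so a smooth-along-leaves partition of unity subordinate to the finite cover averages the local corrections into a consistent global one without ever disturbing the transverse combinatorics. Verifying that this averaged correction still yields a local map with a single radius, and that equivariance survives the gluing, is where the real work of the argument lies.
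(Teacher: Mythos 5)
This theorem is not proved in the paper at all: it is imported verbatim from Rand's thesis \cite{Rand}, so there is no internal proof to compare against, and your proposal has to stand on its own. Its overall architecture --- local product charts $C\times B$ with a totally disconnected transversal, rigidity of the transverse component, and approximation of the leafwise displacement --- is the right shape and is essentially how such approximation results are proved in the tiling literature.

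There is, however, a genuine gap at the decisive point. You correctly discretize the transverse component $\tau$ (locally constant on small clopen sets, hence determined by a bounded patch), but you never do the analogous thing to the leaf-displacement $\beta$: you only mollify it along leaves. A leafwise mollification of $\beta$ still depends continuously, and in general nontrivially, on the transverse coordinate $c$, i.e.\ on the tiling arbitrarily far from the origin. Then $f_\epsilon$ is \emph{not} a local map: two tilings $T_1, T_2$ with $T_1|_r=T_2|_r$ for huge $r$ but differing far away would receive slightly different leaf displacements, so $f_\epsilon(T_1)|_s\neq f_\epsilon(T_2)|_s$ for every $s>0$, violating the definition of locality. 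The locality of $f_\epsilon$ --- not its smoothness --- is the actual content of the theorem, since it is what lets the map factor through an approximant $K_r$. The missing step is the uniform-continuity argument: by compactness of $X$, for $r$ large enough the oscillation of $\beta$ over each fiber of $\Omega\to K_r$ is less than $\epsilon$, so one may replace $\beta$ by a function that is exactly constant on those fibers (e.g.\ by choosing representatives or averaging over the fiber), incurring an error $<\epsilon$, and only then smooth the resulting function on the branched manifold $K_r$. With that insertion your gluing and equivariance discussion goes through; without it, property (1) fails.
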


\begin{cor}
The maps $f$ and $f_\epsilon$ above are homotopic to each other.
\end{cor}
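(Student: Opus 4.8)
The plan is to construct an explicit homotopy that slides $f(T)$ to $f_\epsilon(T)$ along the single translational leaf which, by property (2) of the approximation theorem, contains them both. Write the $\mathbb{R}^2$-action on the target tiling space $Y$ as $(S,v)\mapsto S\cdot v$, so that a translational leaf is exactly an orbit of this action. Property (2) says that for each $T\in X$ the points $f(T)$ and $f_\epsilon(T)$ lie in the same leaf with $|f(T)-f_\epsilon(T)|<\epsilon$, where this translational gap is the length of the connecting translation vector; hence there is a vector $v(T)\in\mathbb{R}^2$ with $|v(T)|<\epsilon$ and $f_\epsilon(T)=f(T)\cdot v(T)$.

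First I would pin down that $v(T)$ is \emph{unique} once $\epsilon$ is small. This is where the geometry of tiling spaces enters: the translation action has a positive injectivity radius along leaves (for an aperiodic tiling the action is free so any bound works, and in general one takes $\epsilon$ smaller than the length of the shortest period vector), so the map $v\mapsto f(T)\cdot v$ is injective on the ball $\{\,|v|<\epsilon\,\}$ and the displacement $v(T)$ is determined.

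Next I would verify that $T\mapsto v(T)$ is continuous. This follows from the local product structure of the tiling space: in a small box neighborhood $Y$ looks like (transversal)$\,\times\mathbb{R}^2$ with the leaf direction the Euclidean factor, so reading off the translational displacement between two nearby points on the same leaf is a continuous operation. Since $f$ and $f_\epsilon$ are continuous and, for $\epsilon$ small, $f(T)$ and $f_\epsilon(T)$ always fall inside a common box, the composite $T\mapsto v(T)$ is continuous.

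Finally, define $H:X\times[0,1]\to Y$ by $H(T,s)=f(T)\cdot\bigl(s\,v(T)\bigr)$. Because the translation action $Y\times\mathbb{R}^2\to Y$ is jointly continuous and $f$, $v$ are continuous, $H$ is continuous; moreover $H(T,0)=f(T)\cdot 0=f(T)$ and $H(T,1)=f(T)\cdot v(T)=f_\epsilon(T)$, so $H$ is the desired homotopy from $f$ to $f_\epsilon$. The one genuine obstacle is the middle step, establishing that $v(T)$ is a well-defined continuous function of $T$, which is exactly the point at which the positivity of the injectivity radius and the local product (box) structure of tiling spaces must be invoked; everything else is formal.
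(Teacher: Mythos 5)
Your proof is correct and is precisely the argument the paper leaves implicit (the corollary is stated without proof, in keeping with the paper's policy of omitting immediate proofs): a straight-line homotopy $H(T,s)=f(T)\cdot(s\,v(T))$ along the translational leaf guaranteed by property (2) of the approximation theorem. You correctly identify that the only real content is the well-definedness and continuity of the displacement $T\mapsto v(T)$, which you handle via freeness of the $\mathbb{R}^2$-action (or an injectivity-radius bound on $\epsilon$) together with the local product structure of the tiling space.
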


Now let $f:\Omega_T\rightarrow \Omega_{T'}$ be a continuous map. For a PE bundle $P$ over $T'$, we want to define its pullback but the problem is that $f$ is defined on the tiling space not the tiling itself. The standard trick is to restrict $f$ to the orbit $\mathbb{R}^2_T$ of $T$. Actually we want to restrict the approximation $f_\epsilon$ to the orbit of $T$ to get a map $f_\epsilon: \mathbb{R}^2_T\rightarrow \Omega_{T'}$. We will be using the same letter for a map and its restriction from now on. Since $f_\epsilon$ is continuous and $\Omega_{T'}$ is minimal, the image of $f_\epsilon$ is in the orbit of $T'$. Hence, we have a map $f_\epsilon:\mathbb{R}^2_T \rightarrow \mathbb{R}^2_{T'}$. This is the map we will use when we talk about the pullback of a bundle $P$ over $T'$ under $f:\Omega_T\rightarrow \Omega_{T'}$:	

\begin{equation}\label{eq:def}
f^* (P):=f_\epsilon^*(P).
\end{equation}

We have to make sure that a local map like $f_\epsilon$ pulls back PE bundles to PE bundles and have to show that the definition is independent of the choice of $f_\epsilon$.

\begin{lem}
Let $f:\Omega_T\rightarrow \Omega_{T'}$ be a local map. If $P'$ is a PE bundle over $T'$, then $P:=f^*(P')$ is a PE bundle over $T$.
\end{lem}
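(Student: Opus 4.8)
The goal is to show that if $f:\Omega_T\to\Omega_{T'}$ is a local map and $P'$ is a PE bundle over $T'$, then $P:=f^*(P')$ is a PE bundle over $T$. By the definition \eqref{eq:def}, this really means analyzing $f_\epsilon^*(P')$, where $f_\epsilon:\mathbb{R}^2_T\to\mathbb{R}^2_{T'}$ is the restriction of the smooth local approximant to the orbit of $T$. Since $P'$ is a PE bundle with some radius $t'$, it has a PE open cover $\{V_i\}$ of $\mathbb{R}^2_{T'}$ with radius $t'$ and transition functions $h'_{ji}$ that are PE functions of radius $t'$. The plan is to pull these data back through $f_\epsilon$ and verify that the pullback cover and pullback transition functions are again pattern equivariant, with a radius determined by the locality constant of $f$.

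The first step is to recall the standard fact that the pullback $f_\epsilon^*(P')$ is a fiber bundle with open cover $\{f_\epsilon^{-1}(V_i)\}$ and transition functions $h_{ji}:=h'_{ji}\circ f_\epsilon$; this is ordinary bundle theory, so the only thing at issue is the PE property. The key input is the locality of $f_\epsilon$: there is an $r>0$ such that whenever two tilings agree on a ball of radius $r$, their images under $f_\epsilon$ agree on a ball of radius $s$ for some $s$. I would set the candidate radius for $P$ to be this $r$ (or $\max(r, t')$ after rescaling so the target radius $s$ dominates $t'$), and then show that each pulled-back chart domain $f_\epsilon^{-1}(V_i)$ is a PE open set of radius $r$ and that each $h_{ji}$ is a PE function of radius $r$.

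For the transition functions the argument runs as follows. Take $x,y\in\mathbb{R}^2_T$ with the same radius-$r$ pattern, i.e. $[B_r(x)]_T - x = [B_r(y)]_T - y$. Translating the whole orbit identifies $x$ and $y$ with points in tilings agreeing on a radius-$r$ ball, so by locality $f_\epsilon(x)$ and $f_\epsilon(y)$ have the same radius-$s$ pattern, that is $[B_s(f_\epsilon(x))]_{T'} - f_\epsilon(x) = [B_s(f_\epsilon(y))]_{T'} - f_\epsilon(y)$ with $s\geq t'$. Since $h'_{ji}$ is PE of radius $t'\leq s$, it takes the same value at $f_\epsilon(x)$ and $f_\epsilon(y)$, so $h_{ji}(x) = h'_{ji}(f_\epsilon(x)) = h'_{ji}(f_\epsilon(y)) = h_{ji}(y)$. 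Thus $h_{ji} = h'_{ji}\circ f_\epsilon$ is PE of radius $r$. The same locality estimate shows the cover elements $f_\epsilon^{-1}(V_i)$ are PE open sets of radius $r$ (each is a union of pattern classes): if $x\in f_\epsilon^{-1}(V_i)$ and $y$ has the same radius-$r$ pattern as $x$, then $f_\epsilon(y)$ lies in the same radius-$s$ pattern class as $f_\epsilon(x)\in V_i$, and since $V_i$ is PE of radius $t'\leq s$, we get $f_\epsilon(y)\in V_i$, hence $y\in f_\epsilon^{-1}(V_i)$. Taken together with the cocycle identity for $h'_{ji}$ (which is preserved under precomposition), this exhibits $P=f_\epsilon^*(P')$ as a PE $G$-bundle of radius $r$ over $T$.

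The main obstacle, and the place requiring real care rather than bookkeeping, is the bridge between the abstract locality hypothesis, stated in terms of tilings $T_1,T_2\in\Omega_T$ agreeing on balls, and the concrete pattern-equivariance condition, stated in terms of points $x,y$ in the single orbit $\mathbb{R}^2_T$ agreeing on the radius-$r$ patch. One must correctly translate between "$x$ and $y$ have the same radius-$r$ pattern in $T$" and "the translates $T-x$ and $T-y$ agree on $B_r(0)$," and then verify that the locality conclusion about $f_\epsilon(T-x)$ and $f_\epsilon(T-y)$ descends, via $f_\epsilon$ commuting with translations (property (3) of the approximation theorem), to the statement that $f_\epsilon(x)$ and $f_\epsilon(y)$ share a radius-$s$ patch. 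Making this translation precise, and keeping track of the relationship between the source radius $r$ and the target radius $s$, is the crux; once it is in place the verification of the PE axioms is routine. A secondary point, deferred to the companion lemma, is independence of the choice of $f_\epsilon$, which follows because different choices are homotopic and homotopic maps pull back isomorphic bundles.
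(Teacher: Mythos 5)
Your proposal is correct and follows essentially the same route as the paper's own proof: both reduce the claim to showing that PE transition functions and PE open sets pull back to PE objects, and both use the locality constant $r$ via the translation trick $T_1:=T-x$, $T_2:=T-y$ to transfer agreement of radius-$r$ patches in $T$ to agreement of radius-$s$ patches in $T'$. Your extra care in matching the locality output radius $s$ against the bundle's PE radius $t'$ is a small refinement of bookkeeping the paper leaves implicit, not a different argument.
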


\begin{proof}
We need to check if PE transition functions pullback to PE transition functions and if PE open sets pullback to PE open sets. Let's start with the PE functions. Let $\alpha :\mathbb{R}^2_{T'}\rightarrow \mathbb{R}$ be a PE function with radius $s$. Since $f$ is local, there exist an $r>0$ such that  for every $T_1$, $T_2$ in the orbit of $T$ we have $T_1|_r=T_2|_r$ implies $f(T_1)|_s=f(T_2)|_s$. Now, for our tiling $T$, assume we have two patches with centers at $x$ and $y$ agreeing up to radius $r$. Define $T_1:=T-x$ and $T_2:=T-y$. Then, $f(T_1)|_s=f(T_2)|_s$. Hence, patches at $f(x)$ and $f(y)$ agree up to radius $s$, so that $\alpha(f(x))=\alpha(f(y))$. This means that $(f^*(\alpha))(x)=\alpha(f(x))=\alpha(f(y))=(f^*(\alpha))(y)$. So, $f^*(\alpha)$ is PE with radius $r$.

To see that PE open sets pullback to PE open sets, let $U'$ be a PE open set with radius $s$ in $\mathbb{R}^2_{T'}$. Let $x\in f^*(U')$ and let $y\in \mathbb{R}^2_T$ be such that the patches at the points $x$ and $y$ agree up to radius $r$ in $\mathbb{R}^2_T$ where we choose $r$ given by the locality of $f$. Then, by the similar arguments as above, we see that the patches around $f(x)$ and $f(y)$ agree up to radius $s$. Since $U'$ is PE with radius $s$, this means that $f(y)\in U'$. Hence, $y\in f^*(U')$ and hence $f^*(U')$ is PE with radius $r$.
\end{proof}

Now we have to show that the definition (\ref{eq:def}) is independent of the choice of $f_\epsilon$. If we instead defined it using an $f_\delta$, then since both are homotopic to $f$, they are homotopic to each other. The important point is we can approximate any homotopy between local maps by a local homotopy so that we stay in the category of the PE bundles when we pullback via this homotopy. The following lemma(see \cite{Rand}) shows that we can do this.

\begin{lem}
Let $F: \Omega_{T_0}\times [0, 1]\rightarrow \Omega_{T'_0}$ be a homotopy such that each $F_t$ are continuous maps on $\Omega_{T_0}$ and $F_0$ and $F_1$ are local maps. Then $F^*_0(PREP(\Omega_{T'_0}))=F^*_1(PREP(\Omega_{T'_0}))$.
\end{lem}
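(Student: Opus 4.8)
The plan is to reduce the statement to the classical fact that homotopic maps induce isomorphic pullbacks of a bundle, while making sure that every bundle and every isomorphism produced along the way stays inside the PE category. Since $PREP(\Omega_{T'_0})$ is a set of equivalence classes of flat PE principal $G$-bundles, it suffices to fix one such bundle $P'$ over $T'_0$ and to show that $F_0^*(P')$ and $F_1^*(P')$ represent the same class in $PREP(\Omega_{T_0})$. By the previous lemma both pullbacks are already flat PE bundles, since $F_0$ and $F_1$ are local; flatness is preserved because pullbacks of locally constant transition functions are locally constant. So the only thing to produce is a PE isomorphism between them.

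First I would pass to orbits and make the homotopy local. Restricting $F$ to $\mathbb{R}^2_{T_0}\times[0,1]$ gives a homotopy of continuous maps into $\mathbb{R}^2_{T'_0}$ whose ends are the local maps $F_0,F_1$. Invoking Rand's thesis, which lets us approximate a homotopy between local maps by a homotopy through local maps, I would replace $F$ by a homotopy $H:\mathbb{R}^2_{T_0}\times[0,1]\to\mathbb{R}^2_{T'_0}$ that is \emph{local} at every level and whose ends $H_0,H_1$ are local maps lying in the same translational leaves as, and arbitrarily close to, $F_0,F_1$. By the homotopy corollary each $H_i$ is homotopic to $F_i$, so by the definition $f^*(P):=f_\epsilon^*(P)$ of the pullback it suffices to compare $H_0^*(P')$ and $H_1^*(P')$; establishing their equality is exactly what also certifies that the pullback does not depend on the chosen local approximation.

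Next I would run the classical homotopy-invariance argument inside the PE world. Because $H$ is local, its pullback transition functions are PE and locally constant, so $H^*(P')$ is a flat PE principal $G$-bundle over the cylinder $\mathbb{R}^2_{T_0}\times[0,1]$, with radius controlled by the locality constant of $H$ and the radius of $P'$ exactly as in the preceding lemma. The restrictions of $H^*(P')$ to the two ends are $H_0^*(P')$ and $H_1^*(P')$. The standard theorem that a bundle over $X\times[0,1]$ restricts to isomorphic bundles on $X\times\{0\}$ and $X\times\{1\}$ supplies an isomorphism; for a \emph{flat} bundle this isomorphism is parallel transport in the $[0,1]$-direction, built solely from the locally constant transition functions of $H^*(P')$. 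Since those functions are PE, the transport isomorphism is PE, so $H_0^*(P')\cong H_1^*(P')$ as PE bundles and define the same element of $PREP(\Omega_{T_0})$.

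Assembling the chain $F_0^*(P')=H_0^*(P')\cong H_1^*(P')=F_1^*(P')$ gives the lemma, and since $P'$ is an arbitrary representative this yields $F^*_0(PREP(\Omega_{T'_0}))=F^*_1(PREP(\Omega_{T'_0}))$. The hard part will be the first step, producing the local homotopy $H$ with the required control on its ends, which is precisely where Rand's thesis is needed, together with the verification that the parallel-transport isomorphism of the flat PE bundle over the cylinder respects the PE condition; the cleanest way to secure the latter is to note, via Theorem \ref{thm:aaa}, that it arises as the pullback of the genuine isomorphism on the relevant approximant. Under the main theorem the whole argument can also be read at the level of representations: a local map descends to the approximants and pulls back a conjugacy class $[\rho]\in Hom(\pi_1(K_t),G)/G$ by precomposition with the induced map on $\pi_1$, homotopic local maps induce the same map on $\pi_1$ up to the conjugacy that modding by $G$ already removes, and so they send $[\rho]$ to the same class in the direct limit.
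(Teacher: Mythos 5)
The paper does not actually prove this lemma: it states it and defers entirely to Rand's thesis, with only the one-line gloss that ``we can approximate any homotopy between local maps by a local homotopy so that we stay in the category of PE bundles when we pull back.'' Your proposal is therefore more detailed than the source, and it is a faithful expansion of exactly the argument the paper gestures at: approximate $F$ by a homotopy through local maps, pull the flat PE bundle back over the cylinder $\mathbb{R}^2_{T_0}\times[0,1]$, and use the locally constant transition functions to transport the two end restrictions onto each other inside the PE category. Two points deserve sharpening. First, ``local at every level'' is weaker than what you need: to say that $H^*(P')$ is a PE bundle over the cylinder \emph{with a single radius}, you need the locality constant of $H_t$ to be uniform in $t\in[0,1]$ (equivalently, $H$ should be local as a map from the cylinder, with the patch condition read in the $\mathbb{R}^2_{T_0}$ factor); this uniformity is part of what must be extracted from Rand's approximation theorem, and you correctly flag that step as the crux, but you should state the uniformity explicitly rather than leave it implicit in ``radius controlled by the locality constant of $H$.'' Second, your closing remark that the statement can instead be read through Theorem \ref{datheorem} at the level of $\varinjlim Hom(\pi_1(K_t),G)/G$ is a legitimate alternative route (and arguably cleaner, since homotopic maps induce the same map on $\pi_1$ up to conjugation, which the quotient by $G$ absorbs), but as written it is only a sketch: one must check that a local map genuinely descends to a map of approximants for all sufficiently large $t$ and that the induced maps are compatible with the forgetful maps in the direct limit. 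Neither issue is a fatal gap; both are the kind of verification the paper itself silently outsources.
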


Using these results, we have the following.
\begin{thm}
If $f:\Omega_T\rightarrow \Omega_{T'}$ is a homeomorphism then $f^*$ is an isomorphism of pattern equivariant representation varieties of $T$ and $T'$.
\end{thm}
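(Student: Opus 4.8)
The plan is to show that $f^*\colon PREP(T')\to PREP(T)$ is a bijection by exhibiting an explicit two-sided inverse, namely the pullback $(f^{-1})^*$ along the continuous inverse homeomorphism $f^{-1}\colon\Omega_{T'}\to\Omega_T$. Both $f$ and $f^{-1}$ are continuous maps of tiling spaces, so by the preceding development each admits a smooth local approximation (Rand's theorem and its corollary), and the induced pullback on PE bundles is well defined and independent of the chosen approximation. By the two lemmas just proved, $f^*$ and $(f^{-1})^*$ send flat PE principal bundles to flat PE principal bundles and descend to well-defined maps on the equivalence classes that make up $PREP$. It then remains only to verify the two composition identities $f^*\circ(f^{-1})^*=\mathrm{id}_{PREP(T)}$ and $(f^{-1})^*\circ f^*=\mathrm{id}_{PREP(T')}$.

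Both identities reduce to two facts: contravariant functoriality of the pullback on $PREP$, and the computation that a map homotopic to the identity pulls back to the identity on $PREP$. For functoriality, given continuous maps $g\colon\Omega_T\to\Omega_{T'}$ and $h\colon\Omega_{T'}\to\Omega_{T''}$, I would choose local approximations $g_\epsilon$ and $h_\delta$ and observe that $h_\delta\circ g_\epsilon$ is again a local map approximating $h\circ g$: locality is preserved under composition, since if agreement to radius $r$ forces agreement to radius $s$, and agreement to radius $s$ forces agreement to radius $u$, then agreement to radius $r$ forces agreement to radius $u$. Restricting to the orbit $\mathbb{R}^2_T$ and using that for honest maps of $\mathbb{R}^2$ the pullback of a pullback is the pullback of the composite, one gets $g_\epsilon^*\,h_\delta^*(P)=(h_\delta\circ g_\epsilon)^*(P)$ on the nose. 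The subtlety is that the chosen approximation of $h\circ g$ need not equal $h_\delta\circ g_\epsilon$; but both are local maps homotopic to $h\circ g$ (since $g_\epsilon\simeq g$ and $h_\delta\simeq h$ give $h_\delta\circ g_\epsilon\simeq h\circ g$), hence homotopic to one another through continuous maps with local endpoints, so the homotopy lemma (the last lemma above) guarantees that they induce the same map on $PREP$. Thus $(h\circ g)^*=g^*\circ h^*$ as maps of representation varieties.

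The identity computation is the same homotopy argument: any local approximation of $\mathrm{id}_{\Omega_T}$ is homotopic to $\mathrm{id}_{\Omega_T}$ itself, which is genuinely local, so by the homotopy lemma its induced pullback on $PREP(T)$ is the identity. Combining these, from $f^{-1}\circ f=\mathrm{id}_{\Omega_T}$ and $f\circ f^{-1}=\mathrm{id}_{\Omega_{T'}}$ functoriality yields $f^*\circ(f^{-1})^*=(f^{-1}\circ f)^*=\mathrm{id}_{PREP(T)}$ and $(f^{-1})^*\circ f^*=(f\circ f^{-1})^*=\mathrm{id}_{PREP(T')}$, so $f^*$ is a bijection and therefore an isomorphism of the PE representation varieties. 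The main obstacle, and the place where all the care is needed, is the functoriality step: because the pullback is defined only up to a choice of local approximation, every equation must be read at the level of $PREP$ rather than of individual bundles, and its validity rests entirely on the homotopy-invariance lemma. I would therefore be careful to check that each composition of approximations is genuinely local and that the relevant homotopies pass through continuous maps with local endpoints, so that the lemma applies in each instance.
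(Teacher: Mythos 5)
Your argument is correct and is exactly the argument the paper intends: the paper states this theorem with no proof beyond the phrase ``Using these results,'' and your two-sided-inverse construction via $(f^{-1})^*$, together with functoriality of pullback under composition of local approximations and the homotopy-invariance lemma, is the canonical way to assemble those results. The only point to watch is that the homotopy lemma as stated asserts only equality of images, $F^*_0(PREP(\Omega_{T'_0}))=F^*_1(PREP(\Omega_{T'_0}))$, whereas your functoriality and identity steps require the stronger (and surely intended) statement that homotopic local maps induce the \emph{same map} on $PREP$; granting that reading, everything you wrote goes through.
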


\section{Acknowledgments}
I would like to thank Lorenzo Sadun for his help during the whole process of writing this paper.

\section{Appendix: Proof of Lemma \ref{lem:ma232}}

\begin{figure}
\includegraphics[width=95mm]{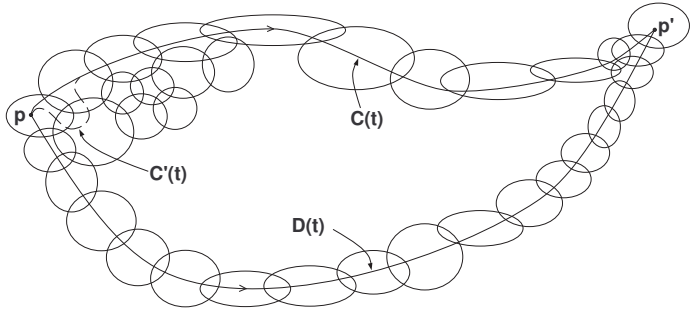} \caption{Independence of path}
\end{figure}

We will construct a map $\Psi: f^*(P)\rightarrow F$ such that when restricted to each fiber it is a homeomorphism. This implies that $f^*(P)$ is the trivial bundle $X'\times F$. Let $q\in f^*(P)$ be such that $\pi'(q)=p$ and $f(p)=\tilde{p}$. Given a chart $(U_{\tilde{p}}, \Phi_{\tilde{p}})$ around $\tilde{p}$, we can pull it back to a chart $(U_{p}, \Phi_{p})$ around $p$ and have the usual induced trivializing map $\phi_{p}:\pi'^{-1}(U_{p})\rightarrow F$ whose restriction to each fiber is a homeomorphism which we denote by the same symbol $\phi_{p}:\pi'^{-1}(p)\rightarrow F$. In the following, we will extend this map to all of $f^*(P)$. 

Let $q'\in f^*(P)$ be such that $\pi'(q')=p'$. If $p' \in U_{p}$ then we can use $\phi_{p}:\pi^{-1}(U_{p})\rightarrow F$ to map the fiber above $p'$ to $F$ and we are done. If $p' \notin U_{p}$ then cover the path $C:I\rightarrow X'$ from $p$ to $p'$ with finitely many open chart sets $\{U_{p_i}\}$, $i=0,1,2,\ldots,n$ where $U_{p_0}=U_{p}$ and $U_{p_n}=U_{p'}$. Then, multiplying the constant transition functions for each intersection $U_{p_i}\cap U_{p_j}$ we get a map $$\Psi: \pi^{-1}(U_{p'})\rightarrow F,$$ where $\Psi(r):=g_{pp_1}\cdot\;\cdots\;\cdot g_{p_{n-2}p_{n-1}}\cdot g_{p_{n-1}p'}\cdot \phi_{p'}(r)$. By the co-cycle condition it is easy to see that this map doesn't depend on the choice of ``\textsl{middle}'' open sets; that is, the open sets indexed from $i=1$ to $i=n-1$, but it depends on the choice of initial and final open sets.

Using the co-cycle condition and the simply connectedness of $X'$ we can show that $\Psi$ is independent of the path chosen to connect $p$ and $p'$ as follows: Let $D:I\rightarrow X'$ be another curve connecting these two points. Since these curves bound a compact region, we can fill it with finitely many open chart sets whose union contain the curves and the region they are bounding. Construct a local homotopy using three sets at a time and begin by perturbing the initial curve to a curve $C'$ as in the figure. By the simply connectedness of $X'$, we can have such a local homotopy and by the co-cycle condition the maps obtained from $C$ and $C'$ are the same. Repeat this procedure until $D$ is reached and patch these local homotopies to get a homotopy from $C$ to $D$. Hence our map is independent of the path and we have a global map $\Psi: f^*(P)\rightarrow F$ which is a homeomorphism when restricted to each fiber. Thus, $f^*(E)$ is the trivial bundle $X'\times F$.


\bibliographystyle{plain}

\begin{thebibliography}{9}

\bibitem{Gahler} F. G\"{a}hler, talk given at the conference ``Aperiodic Order, Dynamical Systems, Operator Algebras and Topology'', 2002, slides available at \url{www.pims.math.ca/science/2002/adot/lectnotes/Gaehler}

\bibitem{Kel} J. Kellendonk, \emph{Pattern-equivariant functions and cohomology}, J. Phys. A 36 (2003), 5765-5772.

\bibitem{Put} J. Kellendonk and I. Putnam, \emph{The Ruelle-Sullivan map for $\mathbb{R}^n$ actions}, Math. Ann. 334 (2006), 693-711.

\bibitem{Naber} G. L. Naber, \emph{Topology, Geometry, And Gauge Fields: Foundations}, Springer-Verlag, NY, 1997.


\bibitem{Rand} B. Rand, \emph{Pattern Equivariant Cohomology of Tiling Spaces with Rotations}, PhD thesis, University of Texas. Austin, TX, 2006.

\bibitem{Sadun1} L. Sadun, \emph{Pattern-Equivariant Cohomology With Integer Coefficients}, Ergodic Theory Dynan. Sys. 27 (2007) 1991-1998.

\bibitem{Steenrod} N. E. Steenrod, \emph{The Topology Of Fiber Bundles}, Princeton University Press, Princeton, 1951.

\end{thebibliography}

\end{document}